\numberwithin{equation}{section}
\numberwithin{figure}{section}
\theoremstyle{plain}
\newtheorem{thm}{\protect\theoremname}[section]
  \theoremstyle{plain}
  \newtheorem{prop}[thm]{\protect\propositionname}
  \theoremstyle{definition}
  \newtheorem{defn}[thm]{\protect\definitionname}
  \theoremstyle{remark}
  \newtheorem{rem}[thm]{\protect\remarkname}
  \theoremstyle{plain}
  \newtheorem{lem}[thm]{\protect\lemmaname}
  \theoremstyle{definition}
  \newtheorem{example}[thm]{\protect\examplename}
  \newtheorem{cor}[thm]{\protect\corollaryname}
  \theoremstyle{plain}
  \providecommand{\definitionname}{Definition}
  \providecommand{\examplename}{Example}
  \providecommand{\lemmaname}{Lemma}
  \providecommand{\propositionname}{Proposition}
  \providecommand{\remarkname}{Remark}
  \providecommand{\corollaryname}{Corollary}
\providecommand{\theoremname}{Theorem}
\DeclareMathOperator{\Ch}{\mathcal{C}\mathit{h}}
\begin{document}

\title{Function spaces and classifying spaces of algebras over a prop}

\address{University of Copenhagen, Department of Mathematical Sciences,
Universitetsparken 5, 2100 København}

\email{yalinprop@gmail.com}

\author{Sinan Yalin}
\begin{abstract}
The goal of this paper is to prove that the classifying spaces of categories of algebras governed by a prop can be determined by using function spaces on the category of props.
We first consider a function space of props to define the moduli space of algebra structures over this prop on an object of the base category.
Then we mainly prove that this moduli space is the homotopy fiber of a forgetful map of classifying spaces, generalizing to the prop setting a theorem of Rezk.

The crux of our proof lies in the construction of certain universal diagrams in categories of algebras over a prop.
We introduce a general method to carry out such constructions in a functorial way.

\textit{Keywords} : props, classifying spaces, moduli spaces, bialgebras category, homotopical algebra, homotopy invariance.

\textit{AMS} : 18G55 ; 18D50 ; 18D10 ; 55U10.

\end{abstract}
\maketitle

\tableofcontents

\section*{Introduction}

Associative algebras, Lie algebras, Poisson algebras and their variants play a key role in algebra, topology, category theory, differential and algebraic geometry, mathematical physics. They all share the common feature of being defined by operations with several inputs and one single output (the associative product, the Lie bracket, the Poisson bracket). A powerful device to handle such kind of algebraic structures is the notion of operad, which have proven to be a fundamental tool to study algebras such as the aforementioned examples, feeding back important outcomes in these various fields. However, algebraic structures
equipped not only with products but also with coproducts play a crucial role in various places in mathematics. One could mention for instance the following important examples: Hopf algebras in representation theory and mathematical physics, Frobenius algebras encompassing the Poincaré duality phenomenon in algebraic topology (which corresponds to unitary and counitary Frobenius bialgebras, see \cite{Koc}), Lie bialgebras introduced by Drinfeld in quantum group theory (see \cite{Dri1} and \cite{Dri2}), involutive Lie bialgebras originally encoding operations on free loops on surfaces in the work of Turaev \cite{Tur} and then generalized to higher dimensional manifolds by Chas-Sullivan \cite{CS2} in string topology \cite{CS1}.
A convenient way to handle such kind of structures is to use the formalism of props, a generalization of operads encoding algebraic structures based on operations with several inputs and several outputs. A dg prop is a collection of complexes $P=\{P(m,n)\}_{m,n\in\mathbb{N}}$, where each $P(m,n)$ represents formal operations with $m$ inputs and $n$ outputs. This collection $P$ is equipped with composition products grafting and concatenating these operations in a compatible way.

This paper is a follow-up of \cite{Yal}, where we set up a homotopy theory for algebras over (possibly colored) differential graded (dg for short) props. The crux of our approach lies on the proof that the Dwyer-Kan classifying spaces attached to categories of algebras over dg props are homotopy invariants of the dg prop. Such spaces have been introduced by Dwyer and Kan in their seminal work on simplicial localization of categories (see \cite{DK1}, \cite{DK2} and \cite{DK3}).
Recall from these papers that the classifying space of a category with weak equivalences (for instance a model category) is the nerve of its subcategory of weak equivalences. It encodes information about homotopy types and internal symmetries of the objects, i.e. their homotopy automorphisms. The goal of the present paper is to give another description of these classifying spaces, in terms of function spaces of dg prop morphisms, in order to make their homotopy theory accessible to computation. These function spaces are moduli spaces of algebra structures, that is, simplicial sets $P\{X\}$ whose vertices are dg prop morphisms $P\rightarrow End_X$ representing a $P$-algebra structure on an object $X$ of the base category.
For us, the base category is the category $\Ch$ of $\mathbb{Z}$-graded chain complexes over a field $\mathbb{K}$.
Let $\Ch^P$ be the category of $P$-algebras and $w\Ch^P$ be its subcategory obtained by restriction to morphisms which are quasi-isomorphisms in $\Ch$. Let us denote by $\mathcal{N}(-)$ the nerve of a category. Our main theorem reads:
\begin{thm}
Let $P$ be a cofibrant dg prop defined in the category of chain complexes $\Ch$ and $X\in \Ch$.
The commutative square
\[
\xymatrix{P\{X\}\ar[d]\ar[r] & \mathcal{N}w\Ch^{P}\ar[d]\\
\{X\}\ar[r] & \mathcal{N}w\Ch
}
\]
is a homotopy pullback of simplicial sets.
\end{thm}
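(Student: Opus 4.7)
To prove the square is a homotopy pullback, it suffices to show that for each vertex $X$ of $\mathcal{N}(wCh_{\mathbb{K}})$ the induced map of homotopy fibers $P\{X\} \to \mathrm{hofib}_{X}(\mathcal{N}(w(Ch_{\mathbb{K}})^{P}) \to \mathcal{N}(wCh_{\mathbb{K}}))$ is a weak equivalence of simplicial sets. The strategy is to adapt Rezk's proof of the analogous theorem for operads, replacing his monadic free-algebra constructions by functorial universal diagrams in $(Ch_{\mathbb{K}})^{P}$ as announced in the introduction, since over an arbitrary prop free algebras are not available in as manageable a form.

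The comparison map is defined degreewise. On a vertex of $P\{X\}$, a prop morphism $P \to End_{X}$ supplies a tautological $P$-algebra structure on $X$ and hence a point of the fiber via the identity weak equivalence. For an $n$-simplex of $P\{X\}$, which is a morphism from a cofibrant cosimplicial resolution of $P$ into $End_{X}$ coming from the simplicial enrichment of props, one needs an $n$-string of weakly equivalent $P$-algebras whose underlying chain complex is constantly $X$. To produce it I would construct, for each $n$, a universal functor $\mathcal{U}_{n} : [n] \to (Ch_{\mathbb{K}})^{P}$ together with a natural weak equivalence from the underlying diagram of chain complexes to the constant diagram at $X$, such that pulling back along any prop morphism out of $P$ yields the desired $n$-chain of $P$-algebra structures on $X$.

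The last step is to verify that the comparison map is a weak equivalence. Two ingredients from \cite{Yal} make this possible: cofibrancy of $P$, which guarantees that $P\{X\}$ already computes the derived mapping space of props from $P$ to $End_{X}$, and the homotopy invariance of the classifying spaces $\mathcal{N}(w(Ch_{\mathbb{K}})^{P})$ under weak equivalences of $P$, which allows one to replace $P$ by a convenient cell model without changing the target. Using $\mathcal{U}_{n}$ one then interpolates, via a simplicial contracting homotopy, between a general element of the fiber (an $n$-string covering $X$) and the tautological one produced by a vertex of $P\{X\}$. The main obstacle is not this last step, which follows a familiar pattern, but the construction of the universal diagrams $\mathcal{U}_{n}$: doing it functorially and coherently in $n$ requires a general method for producing cofibrant diagrams of $P$-algebras, and this is the central technical contribution of the paper.
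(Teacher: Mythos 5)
Your overall plan---reduce to comparing $P\{X\}$ with the homotopy fiber of the forgetful map and adapt Rezk---aims at the right target, but the proposal omits the devices that actually make the adaptation work, and the step you do describe would not go through as stated. An $n$-simplex of the homotopy fiber of $\mathcal{N}(w(Ch_{\mathbb{K}})^{P})\rightarrow\mathcal{N}(wCh_{\mathbb{K}})$ over $X$ is \emph{not} an $n$-string of weakly equivalent $P$-algebras with constant underlying complex $X$; that describes at best the strict fiber, and the entire difficulty is to identify strict and homotopy fibers. This identification requires a Quillen Theorem B--type statement (the paper uses Rezk's Lemma 4.2.2), whose hypotheses amount to showing that base change along strings of \emph{acyclic fibrations} induces weak equivalences of moduli spaces (Lemmas 4.1 and 4.3, Propositions 4.5 and 4.6 of the paper). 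Those hypotheses are only available after restricting from all weak equivalences to acyclic fibrations, so one needs first the equivalence $\mathcal{N}fwCh_{\mathbb{K}}^{P}\stackrel{\sim}{\rightarrow}\mathcal{N}wCh_{\mathbb{K}}^{P}$ (Theorem 3.1). That is where the universal constructions of diagrams of $P$-algebras (the twisted-sum category and the functorial factorization of Theorem 2.8) are actually used, via Quillen's Theorem A---not where you place them, namely to build comparison simplices $\mathcal{U}_{n}$ mapping into the homotopy fiber.

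Second, the comparison between $P\{X\}$ and the classifying space is not made by a direct degreewise map followed by a ``simplicial contracting homotopy''; it passes through the bisimplicial set $\mathcal{N}fwCh_{\mathbb{K}}^{P\otimes\Delta^{\bullet}}$. An $n$-simplex of $P\{X\}$, i.e.\ a prop morphism $P\otimes\Delta^{n}\rightarrow End_{X}$, is naturally a vertex of $\mathcal{N}fwCh_{\mathbb{K}}^{P\otimes\Delta^{n}}$; the homotopy fiber of $diag\,\mathcal{N}fwCh_{\mathbb{K}}^{P\otimes\Delta^{\bullet}}\rightarrow\mathcal{N}fwCh_{\mathbb{K}}$ over $X$ is identified with $diag(U\downarrow X)\simeq P\{X\}$ using the fibration results above, and the diagonal is compared with $\mathcal{N}wCh_{\mathbb{K}}^{P}$ by the homotopy invariance theorem of \cite{Yal} applied to the weak equivalences of cofibrant props $P\otimes\Delta^{n}\rightarrow P\otimes\Delta^{n'}$ (you invoke that theorem only to ``replace $P$ by a cell model'', which is not its role here, and indeed the square of the statement only commutes through this zigzag). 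Without the reduction to acyclic fibrations, the bisimplicial resolution, and the Theorem B argument with its moduli-space hypotheses, your sketch has no mechanism for producing the asserted weak equivalence onto the homotopy fiber.
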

As a consequence, we get the following decomposition of function spaces in terms of homotopy automorphisms:
\begin{cor}
\emph{We have
\[
P\{X\}\sim \coprod_{[X]} Lw\Ch^{P}(X,X)
\]
where $L(-)$ is the simplicial localisation functor of Dwyer-Kan \cite{DK2},
and $[X]$ ranges over the weak equivalence classes of $P$-algebras having $X$ as underlying object.
In particular, the simplicial monoids of homotopy automorphisms $Lw\Ch^{P})(X,X)$ are homotopically small in the sense of Dwyer-Kan, that is, their homotopy groups are all small as sets.}
\end{cor}
Theorem 0.1 is a broad generalization of the first main result of Rezk's thesis \cite[Theorem 1.1.5]{Rez},
which concerns the case of operads in simplicial sets and simplicial modules. However, the method
of \cite{Rez} relies on the existence of a model category structure on algebras over operads, which does not exist anymore
for algebras over dg props.
The crux of the proof of Theorem 0.1 lies in the construction of functorial diagram factorizations in categories of algebras over dg props. We use a new approach, relying on universal categories of algebras over dg props, to perform such constructions
in our context. This method enables us to get round the lack of model structure.

We would like to emphasize some links with two other objects
encoding algebraic structures and their deformations. Theorem 0.1 asserts that we can use
a function space of dg props, the moduli space $P \{ X \}$, to determine
classifying spaces of categories of algebras over dg props $\mathcal{N}w\Ch^{P}$.
The homotopy groups of this moduli space, in turn, can be approached by means of a Bousfield-Kan type spectral sequence.
The $E_2$-page of this spectral sequence is identified with the cohomology of certain deformation complexes.
These complexes have been studied  in \cite{FMY}, \cite{Mar2} and \cite{MV}.
These papers prove the existence of an $L_{\infty}$-structure on such complexes
which generalizes the intrinsic Lie bracket of Schlessinger and Stasheff \cite{SS}.
We aim to apply this spectral sequence technique and provide
new results about the deformation theory of bialgebras in an ongoing work. To complete this outlook, let us point out
that Ciocan-Fontanine and Kapranov used a similar approach to that of Rezk in \cite{CFK} in order to define
a derived moduli space of algebras structures in the formalism of dg schemes. The author recently proved in \cite{Yal3},
by different methods, that the simplicial moduli spaces considered in the present paper are also the global points of derived
moduli stacks in the setting of Toen-Vezzosi's derived algebraic geometry, and that the deformation complexes of \cite{MV} really
computes the tangent complexes of these stacks.

\noindent
\textit{Organization.} In Section 1, we briefly recall several properties of dg props and their algebras, and we define the notion of moduli space of algebraic structures.
In Section 2, we revisit the notion of a colored dg prop as a symmetric monoidal dg category generated by words of colors. Then we perform the main technical construction of
this section: a dg category associated to the data of a small category $\mathcal{J}$ and a colored dg prop $P_{\mathcal{I}}$ encoding $\mathcal{I}$-diagrams of $P$-algebras,
where $\mathcal{I}$ is a subcategory of $\mathcal{J}$. This category of ``formal variables'' is used to explain how a functorial $\mathcal{I}$-diagram of $P$-algebras
can be extended to a functorial $\mathcal{J}$-diagram of $P$-algebras under several technical assumptions. This construction applies in particular
to the functorial factorizations of morphisms provided by the axioms of model categories. In Section 3,
we prove that the classifying space of quasi-isomorphisms of $P$-algebras is weakly equivalent to the classifying space of acyclic surjections of $P$-algebras.
For this, we need to examine in Section 3 how the internal and external tensor products of a diagram category behaves with respect to its injective and projective model structures.
The projective case is more subtle and does not appear in the literature. Then we combine the results of Section 2, those of Section 3 and Quillen's Theorem A to
provide this weak equivalence (induced by an inclusion of categories). Finally, in Section 4 we rely on the previous results to generalize \cite[Theorem 1.1.5]{Rez} to the dg prop setting.

\noindent
\textbf{Acknowledgements.} I would like to thank Benoit Fresse for his useful remarks. I also thanks the referee for his careful reading and useful comments.

\section{Props, algebras and moduli spaces}

Throughout this paper, we work in the category $\Ch$ of  $\mathbb{Z}$-graded chain complexes over a field $\mathbb{K}$.
We write ``dg'' as a shortcut for ``differential graded''.
We give brief recollections on our conventions and on the main definitions concerning dg props in this section. We refer
to \cite{Fre1} for a more comprehensive account.

\subsection{Recollections on props and their algebras}

A $\mathbb{S}$-biobject in $\Ch$ is a double sequence $\{M(m,n)\in \Ch\}_{(m,n)\in\mathbb{N}^{2}}$
where each $M(m,n)$ is equipped with a right action of the symmetric group on $m$ letters $\Sigma_m$,
a left action of the the symmetric group on $n$ letters $\Sigma_n$ and so that these actions commute to each other.
\begin{defn}
A dg prop is a $\mathbb{S}$-biobject in $\Ch$ endowed with associative horizontal composition products
\[
\circ_{h}:P(m_{1},n_{1})\otimes P(m_{2},n_{2})\rightarrow P(m_{1}+m_{2},n_{1}+n_{2}),
\]
vertical associative composition products
\[
\circ_{v}:P(k,n)\otimes P(m,k)\rightarrow P(m,n)
\]
and units $\eta:\mathbb{K}\rightarrow P(n,n)$.
These products satisfy the exchange law
\[
(f_1\circ_h f_2)\circ_v(g_1\circ_h g_2) = (-1)^{|g_1||f_2|}(f_1\circ_v g_1)\circ_h(f_2\circ_v g_2)
\]
and are compatible with the actions of symmetric groups and with the differentials.
Morphisms of dg props are equivariant morphisms of collections compatible with the composition products.
We denote by $Prop$ the category of dg props.
\end{defn}
The following definition shows how a given dg prop encodes algebraic operations on the tensor powers
of a chain complex:
\begin{defn}
(1) The endomorphism dg prop of a chain complex $X$ is given by $End_{X}(m,n)=Hom_{\Ch}(X^{\otimes m},X^{\otimes n})$
where $Hom_{\Ch}(-,-)$ is the internal hom bifunctor of $\Ch$.
The horizontal composition is given by the tensor product of homomorphisms
and the vertical composition is given by the composition of homomorphisms.

(2) Let $P$ be a dg prop. A $P$-algebra is a chain complex $X$ equipped with a dg prop morphism $P\rightarrow End_X$.
\end{defn}
Hence any ``abstract'' operation of $P$ is sent to an operation on $X$, and the way abstract operations
compose under the composition products of $P$ tells us the relations satisfied by the corresponding
algebraic operations on $X$.

One can perform similar constructions in the category of colored $\mathbb{S}$-biobjects
in order to define colored dg props and their algebras:
\begin{defn}
Let $C$ be a non-empty set, called the set of colors.

(1) A $C$-colored $\mathbb{S}$-biobject $M$ is a double sequence of
chain complexes
\[
\{M(m,n)\}_{(m,n)\in\mathbb{N}^{2}}
\]
where each $M(m,n)$ admits commuting right $\Sigma_m$ action and left
$\Sigma_n$ action as well as a decomposition
\[
M(m,n)=\bigoplus_{c_{i},d_{i}\in C}M(c_{1},\cdots,c_{m};d_{1},\cdots,d_{n})
\]
compatible with these actions. The objects $M(c_1,\cdots,c_m;d_1,\cdots,d_n)$
should be thought as spaces of operations with colors $c_1,\cdots,c_m$
indexing the $m$ inputs and colors $d_{1},\cdots,d_{n}$ indexing the
$n$ outputs.

(2) A $C$-colored dg prop $P$ is a $C$-colored $\mathbb{S}$-biobject
endowed with a horizontal composition
\begin{multline*}
\circ_{h}:P(c_{11},\cdots,c_{1m_{1}};d_{11},\cdots,d_{1n_{1}})\otimes\cdots\otimes P(c_{k1},\cdots,c_{km_{k}};d_{k1},\cdots,d_{kn_{1}})\\
\rightarrow P(c_{11},\cdots,c_{km_{k}};d_{k1},\cdots,d_{kn_{k}})\\
\subseteq P(m_{1}+\cdots+m_{k},n_{1}+\cdots+n_{k})\\
\end{multline*}
and a vertical composition
\begin{multline*}
\circ_{v}:P(c_{1},\cdots,c_{k};d_{1},\cdots,d_{n})\otimes P(a_{1},\cdots,a_{m};b_{1},\cdots,b_{k})\\
\rightarrow P(a_{1},\cdots,a_{m};d_{1},\cdots,d_{n})\\
\subseteq P(m,n)\\
\end{multline*}
which is equal to zero unless $b_{i}=c_{i}$ for $1\leq i\leq k$.
These two compositions satisfy associativity axioms (we refer the
reader to \cite{JY} for details).
\end{defn}

\begin{defn}
(1) Let $\{X_{c}\}_{C}$ be a collection of chain complexes.
The $C$-colored endomorphism dg prop $End_{\{X_{c}\}_{C}}$ is defined
by
\[
End_{\{X_{c}\}_{C}}(c_{1},\cdots,c_{m};d_{1},\cdots,d_{n})=Hom_{\Ch}(X_{c_{1}}\otimes\cdots\otimes X_{c_{m}},X_{d_{1}}\otimes\cdots\otimes X_{d_{n}}).
\]

(2) Let $P$ be a $C$-colored dg prop. A $P$-algebra is the data of
a collection of objects $\{X_{c}\}_{C}$ and a $C$-colored dg prop morphism
$P\rightarrow End_{\{X_{c}\}_{C}}$.
\end{defn}

\begin{rem}
Let $\mathcal{I}$ be a small category and let $P$ be a dg prop. We can
build an $ob(\mathcal{I})$-colored dg prop $P_{\mathcal{I}}$ such that the $P_{\mathcal{I}}$-algebras
are the $\mathcal{I}$-diagrams of $P$-algebras in $\Ch$ in the same
way as that of \cite{Mar1}. We refer the reader to Definition 2.3 where this construction is made explicit.
\end{rem}

We provide $\Ch$ with the model category structure such that the weak-equivalences are quasi-isomorphisms and fibrations are degreewise surjections. The category of $\mathbb{S}$-biobjects is a diagram category over $\Ch$, so it inherits a cofibrantly
generated model category structure in which weak equivalences and fibrations are
defined componentwise. The free dg prop functor \cite[Appendix A]{Fre1} gives rise to an adjunction $\Ch^{\mathbb{S}}\rightleftarrows Prop$ between the category of $\mathbb{S}$-biobjects $\Ch^{\mathbb{S}}$ and the category of dg props $Prop$,
which transfers this model category structure to the category of dg props:
\begin{thm}[{see \cite[Theorem 4.9]{Fre1} and \cite[Theorem 1.1]{JY}}]

(1) Suppose that $char(\mathbb{K})>0$. The category $Prop_{0}$
of dg props with non-empty inputs (or outputs) equipped with the
classes of componentwise weak equivalences and componentwise fibrations forms a cofibrantly generated semi-model category.

(2) Suppose that $char(\mathbb{K})=0$. Then the entire category of dg props inherits a cofibrantly generated model category
structure with the weak equivalences and fibrations as above.

(3) Suppose that $char(\mathbb{K})=0$. Let $C$ be a non-empty set.
Then the category $Prop_{C}$ of $C$-colored dg props forms a cofibrantly generated model category with fibrations
and weak equivalences defined componentwise.
\end{thm}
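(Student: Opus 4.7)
The plan is to apply the classical transfer principle for cofibrantly generated model structures (Kan--Quillen--Crans) along the free--forgetful adjunction $F:Ch_{\mathbb{K}}^{\mathbb{S}}\rightleftarrows\mathcal{P}:U$. The category $Ch_{\mathbb{K}}^{\mathbb{S}}$ of $\Sigma$-biobjects carries the componentwise projective model structure inherited from $Ch_{\mathbb{K}}$, whose generating (trivial) cofibrations $I,J$ are direct sums of shifts of the standard generators of $Ch_{\mathbb{K}}$. One declares the generating (trivial) cofibrations of $\mathcal{P}$ to be $F(I)$ and $F(J)$, and weak equivalences and fibrations to be detected by $U$. Under the transfer theorem, the required conditions are: (a) smallness of the domains of $F(I)$ and $F(J)$; (b) every relative $F(J)$-cell complex is sent by $U$ to a quasi-isomorphism.

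Condition (a) follows essentially formally from the explicit graph-theoretic construction of the free prop recalled in Appendix A of \cite{Fre1}: $F$ is built from a coend over a category of directed graphs, and so commutes with the filtered colimits involved in the small object argument. Condition (b) is the main obstacle. It requires analyzing a pushout of the form $P\sqcup_{F(A)}F(B)$ along a generating trivial cofibration $F(A)\rightarrow F(B)$. Using the graph description of the free prop, this pushout admits a filtration whose associated graded pieces are coinvariants under symmetric group actions coming from the automorphisms of the labeling graphs. The quasi-isomorphism $A\xrightarrow{\sim} B$ then propagates through this filtration provided that these symmetric coinvariants are homotopy invariant.

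In characteristic zero, Maschke's theorem makes the functor of $\Sigma_n$-coinvariants exact on $\mathbb{K}[\Sigma_n]$-modules, so the graded pieces of the filtration preserve quasi-isomorphisms. Sequential composition and retract closure then give condition (b) in full generality, proving (2). Part (3) is the same argument with graphs whose edges are labeled by the set of colors $C$, and whose automorphisms still act through symmetric groups; the colored free prop construction of \cite{JY} makes this precise without essential change.

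For (1), in positive characteristic, Maschke's theorem fails and the graded pieces need not preserve quasi-isomorphisms in general. The hard part is to restrict to $\mathcal{P}_0$ (props with non-empty inputs or outputs) and to cofibrant sources, where the graph filtration can be refined so that the problematic isotropy groups act freely on the graph strata. This yields condition (b) only when the pushout is formed from a cofibrant $P$, which is exactly what is needed to obtain a semi-model category structure rather than a full one; the lifting and factorization axioms hold with a cofibrancy hypothesis on the domain of factorizations, and the argument of \cite[Theorem 4.9]{Fre1} carries this through.
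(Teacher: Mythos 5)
This theorem is not proved in the paper: it is quoted with a proof by citation to Fresse \cite{Fre1} and Johnson--Yau \cite{JY}, and your proposal is essentially a faithful outline of the argument given in those references --- transfer along the free--forgetful adjunction $F:Ch_{\mathbb{K}}^{\mathbb{S}}\rightleftarrows\mathcal{P}:U$, smallness via the explicit free prop construction, analysis of pushouts along generating acyclic cofibrations through the graph filtration, exactness of symmetric group coinvariants (Maschke) in characteristic zero, and the restriction to cofibrant domains and to props with non-empty inputs to salvage a semi-model structure in positive characteristic. So the approach matches the intended proof; the only caveat is that your treatment of case (1) ultimately defers the delicate filtration analysis back to \cite[Theorem 4.9]{Fre1}, which is acceptable here since the paper does the same.
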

A semi-model category structure is a slightly weakened version of a model category structure where the
lifting axioms only work for cofibrations with cofibrant domain, and where the factorization axioms only work for
a map with a cofibrant domain (see the relevant section of \cite{Fre1}).
The notion of a semi-model category is sufficient to apply the usual constructions of homotopical algebra.
A dg prop $P$ has non-empty inputs if it satisfies
\[
P(0,n)=\begin{cases}
\mathbb{K}, & \text{if $n=0$},\\
0 & \text{otherwise}.
\end{cases}
\]
We define in a symmetric way a dg prop with non-empty outputs.
Such dg props usually encode algebraic structures without unit or without counit, for instance Lie bialgebras.

We will use all the time the existence of a (semi)-model category structure on dg props.
Our results hold over a field of any characteristic: we can work alternatively with every dg prop in characteristic zero or with dg props with non-empty inputs/outputs in positive characteristic.

Finally, we recall from \cite{Fre1} the construction of the endomorphism dg prop associated to a diagram $F:\mathcal{J}\rightarrow \Ch$:
\[
End_F(m,n):=\int_{i\in\mathcal{J}}Hom_{\Ch}(X_{i}^{\otimes m},X_{i}^{\otimes n})
\]
where $X_i=F(i)$.
This end can equivalently be defined as a coreflexive equalizer
\[
\xymatrix{End_F(m,n)\ar[r] & \prod_{i\in\mathcal{J}}Hom_{\Ch}(X_{i}^{\otimes m},X_{i}^{\otimes n})\ar@<1ex>[r]^-{d_{0}}\ar@<-1ex>[r]_-{d_{1}} & \prod_{u:i\rightarrow j}Hom_{\Ch}(X_{i}^{\otimes m},X_{j}^{\otimes n})\ar@/^{2pc}/[l]^{s_{0}}}
\]
where $d_{0}$ is the product of the maps
\[
u_{*}=(F(u)^{\otimes n}\circ -):Hom_{\Ch}(X_{i}^{\otimes m},X_{i}^{\otimes n})\rightarrow Hom_{\Ch}(X_{i}^{\otimes m},X_{j}^{\otimes n})
\]
induced by the morphisms $u:i\rightarrow j$ of $\mathcal{J}$ and $d_{1}$
is the product of the maps
\[
u^{*}=(-\circ F(u)^{\otimes m}):Hom_{\Ch}(X_{j}^{\otimes m},X_{j}^{\otimes n})\rightarrow Hom_{\Ch}(X_{i}^{\otimes m},X_{j}^{\otimes n})
\]
The section $s_{0}$ is the projection on the factors associated to
the identities $id:i\rightarrow i$.
This construction allows us to characterize a diagram of $P$-algebras $F:\mathcal{J}\rightarrow \Ch^P$, where $\Ch^P$ is the category of $P$-algebras in chain complexes, as a dg prop morphism
\[
P\rightarrow End_{U(F)},
\]
where $U(F)$ is the diagram of chain complexes underlying $F$.

\subsection{Moduli spaces of algebra structures}

Throughout the text, we use the Kan-Quillen model category structure on simplicial sets.
A moduli space of algebra structures over a dg prop $P$, on a given
chain complex $X$, is a simplicial set whose points are
the dg prop morphisms $P\rightarrow End_X$ and connected components are homotopy classes of $P$-algebra structures on $X$.
Such a moduli space can be more generally defined on diagrams of chain complexes.
We then deal with endomorphism dg props of diagrams.
To construct properly such a simplicial set and give its first fundamental
properties, we have to recall some results about cosimplicial and simplicial
resolutions in a model category. For the sake of brevity and clarity,
we refer the reader to \cite[Chapter 16]{Hir} for a complete
treatment of the notions of simplicial resolutions, cosimplicial resolutions
and Reedy model categories.

\begin{defn}
Let $\mathcal{M}$ be a model category and let $X$ be an object of $\mathcal{M}$.

(1) A \emph{cosimplicial resolution} of $X$ is a cofibrant approximation to
the constant cosimplicial object $cc_* X$ in the Reedy model category structure on
cosimplicial objects $\mathcal{M}^{\Delta}$ of $\mathcal{M}$.

(2) A \emph{simplicial resolution} of $X$ is a fibrant approximation to
the constant simplicial object $cs_* X$ in the Reedy model category structure on
simplicial objects $\mathcal{M}^{\Delta^{op}}$ of $\mathcal{M}$.
\end{defn}
\begin{defn}
Let $\mathcal{M}$ be a model category and let $X$ be an object of $\mathcal{M}$.

(1) A \emph{cosimplicial frame} on $X$ is a cosimplicial object $\tilde{X}$ in $\mathcal{M}$,
together with a weak equivalence $\tilde{X} \rightarrow cc_* X$ in the Reedy model category
structure of $\mathcal{M}^{\Delta}$. It has to satisfy the two following properties : the induced map $\tilde{X}^0 \rightarrow X$
is an isomorphism, and if $X$ is cofibrant in $\mathcal{M}$ then $\tilde{X}$ is cofibrant in $\mathcal{M}^{\Delta}$.

(2) A \emph{simplicial frame} on $X$ is a simplicial object $\tilde{X}$ in $\mathcal{M}$,
together with a weak equivalence $cs_* X \rightarrow \tilde{X}$ in the Reedy model category
structure of $\mathcal{M}^{\Delta}$. It has to satisfy the following two properties : the induced map $X \rightarrow \tilde{X}^0$
is an isomorphism, and if $X$ is fibrant in $\mathcal{M}$ then $\tilde{X}$ is fibrant in $\mathcal{M}^{\Delta^{op}}$.
\end{defn}
\begin{prop}[{\cite[Proposition 16.1.9]{Hir}}]
Let $\mathcal{M}$ be a model category. There exists functorial simplicial resolutions and functorial
cosimplicial resolutions in $\mathcal{M}$.
\end{prop}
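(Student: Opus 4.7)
The plan is to obtain both resolutions as functorial fibrant/cofibrant replacements in the Reedy model category structure on cosimplicial (respectively simplicial) objects, once one knows that the Reedy structure itself admits functorial factorizations inherited from $\mathcal{M}$. Recall that $\Delta$ and $\Delta^{op}$ are Reedy categories, so that $\mathcal{M}^{\Delta}$ and $\mathcal{M}^{\Delta^{op}}$ carry the Reedy model structures in which weak equivalences are levelwise, Reedy cofibrations are detected by latching morphisms $L_n \tilde{X} \sqcup_{L_n X} X^n \to \tilde{X}^n$ being cofibrations of $\mathcal{M}$, and Reedy fibrations are detected dually by matching morphisms.

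First I would build a functorial cosimplicial resolution. Starting from $cc_{\ast} X$, I factor the canonical map from the initial cosimplicial object $\emptyset \to cc_{\ast} X$ into a Reedy cofibration followed by a Reedy trivial fibration. The middle term $\tilde{X}$ is then Reedy cofibrant and Reedy weakly equivalent to $cc_{\ast} X$, hence a cosimplicial resolution of $X$ by definition. The construction is performed degree by degree: having constructed $\tilde{X}^{<n}$ together with the required maps to $cc_{\ast} X$, one applies the functorial factorization axiom of $\mathcal{M}$ to the composite $L_n \tilde{X} \to X$, factoring it as a cofibration $L_n \tilde{X} \hookrightarrow \tilde{X}^n$ followed by a trivial fibration $\tilde{X}^n \twoheadrightarrow X$. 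Because each step invokes the functorial factorization of $\mathcal{M}$, and the latching object $L_n(-)$ is itself a functor on $\mathcal{M}^{\Delta}_{< n}$, the assignment $X \mapsto \tilde{X}$ is functorial in $X$.

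Dually, to build a functorial simplicial resolution I factor the map $cs_{\ast} X \to \ast$ in $\mathcal{M}^{\Delta^{op}}$ into a Reedy trivial cofibration followed by a Reedy fibration. At each simplicial degree, having constructed $\tilde{X}^{\leq n}$, I factor the composite $X \to M_n \tilde{X}$ as a trivial cofibration $X \rightarrowtail \tilde{X}^{n+1}$ followed by a fibration $\tilde{X}^{n+1} \twoheadrightarrow M_n \tilde{X}$ by applying the functorial factorization of $\mathcal{M}$. The matching functor $M_n(-)$ is functorial, so the output depends functorially on $X$ and produces a Reedy fibrant replacement of $cs_{\ast} X$.

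The main technical point, and essentially the only nontrivial one, is the verification that the inductive latching/matching construction does yield Reedy cofibrations (respectively fibrations) with the correct weak equivalence class. This amounts to the standard lemma that a map $f : \tilde{X} \to \tilde{Y}$ in $\mathcal{M}^{\Delta}$ is a Reedy (trivial) cofibration if and only if all of its relative latching morphisms are (trivial) cofibrations in $\mathcal{M}$, which is proven by induction on degree using the fact that latching objects are built as colimits along direct Reedy subcategories. Once this is available, the above inductive factorizations lie in the correct classes and produce the required resolutions, with functoriality being automatic from the functoriality of the factorization in $\mathcal{M}$.
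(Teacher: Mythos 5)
The paper offers no argument of its own for this statement: it is quoted directly from Hirschhorn (Proposition 16.1.9), whose proof is precisely your top-level strategy --- the Reedy model structures on $\mathcal{M}^{\Delta}$ and $\mathcal{M}^{\Delta^{op}}$ inherit functorial factorizations from $\mathcal{M}$, and one takes the functorial cofibrant approximation of $cc_{*}X$ (resp. functorial fibrant approximation of $cs_{*}X$). So your proposal is the standard proof and essentially the one the paper relies on by citation.

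One correction to your degree-wise elaboration, which as written does not quite construct a (co)simplicial object. Since $\Delta$ has both a direct part (cofaces) and an inverse part (codegeneracies), the inductive factorization in cosimplicial degree $n$ must be applied to the canonical map $L_{n}\tilde{X}\to M_{n}\tilde{X}\times_{M_{n}(cc_{*}X)}X$ from the latching object to the matching pullback, not to $L_{n}\tilde{X}\to X$ alone: factoring only the latter leaves the codegeneracies of $\tilde{X}$ undefined and does not ensure that the relative matching maps of $\tilde{X}\to cc_{*}X$ are trivial fibrations, which is what ``Reedy trivial fibration'' requires. Dually, in the simplicial case the map to be factored is $L_{n}\tilde{X}\sqcup_{L_{n}(cs_{*}X)}X\to M_{n}\tilde{X}$, not merely $X\to M_{n}\tilde{X}$; otherwise the degeneracies of $\tilde{X}$ and the Reedy trivial cofibrancy of $cs_{*}X\to\tilde{X}$ are not accounted for. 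With the factorizations applied to these latching-to-matching comparison maps --- which is exactly the content of the latching/matching lemma you invoke in your final paragraph --- the construction lands in the correct classes, and functoriality follows, as you say, from the functoriality of the factorizations in $\mathcal{M}$ together with the functoriality of $L_{n}$ and $M_{n}$.
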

\begin{prop}[{\cite[Proposition 16.6.3]{Hir}}]
Let $X$ be an object of $\mathcal{M}$.

(1) If $X$ is cofibrant then every cosimplicial frame of $X$ is
a cosimplicial resolution of $X$.

(2) If $X$ is fibrant then every simplicial frame of $X$ is a simplicial
resolution of $X$.
\end{prop}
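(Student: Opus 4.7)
The plan is to proceed by directly unwinding the definitions: when $X$ is cofibrant (resp.\ fibrant), the conditions built into the definition of a cosimplicial (resp.\ simplicial) frame already encode exactly what is needed to be a cofibrant (resp.\ fibrant) approximation of $cc_*X$ (resp.\ $cs_*X$) in the Reedy model structure. So I would not attempt anything clever: the work is packaged into Definitions 1.7 and 1.8 of the paper and one just has to match the clauses.

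For part (1), I first recall that a cosimplicial resolution of $X$ is, by Definition 1.7, a cofibrant approximation to $cc_*X$ in the Reedy model structure on $\mathcal{M}^{\Delta}$, i.e.\ a Reedy-cofibrant object $Y$ together with a Reedy weak equivalence $Y \to cc_*X$. Let $\tilde{X}$ be a cosimplicial frame on $X$. By Definition 1.8, $\tilde{X}$ is equipped with a Reedy weak equivalence $\tilde{X} \to cc_*X$, so the weak equivalence condition is satisfied for free. It remains to check Reedy cofibrancy of $\tilde{X}$; but this is precisely the second clause in the definition of a cosimplicial frame, which is triggered exactly under the hypothesis that $X$ be cofibrant in $\mathcal{M}$. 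The extra data that $\tilde{X}^0 \to X$ is an isomorphism does not interfere: cofibrant approximations are only required to be weak equivalences, so nothing prevents them from being isomorphisms at a particular level.

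Part (2) is handled by the formally dual argument, replacing $\mathcal{M}^{\Delta}$ by $\mathcal{M}^{\Delta^{op}}$, cosimplicial by simplicial, cofibrant by fibrant, and cofibrant approximation by fibrant approximation. Given a simplicial frame $\tilde{X}$ on a fibrant object $X$, the weak equivalence $cs_* X \to \tilde{X}$ is Reedy by hypothesis, and the second clause of the frame definition supplies Reedy fibrancy of $\tilde{X}$, yielding a fibrant approximation to $cs_*X$, i.e.\ a simplicial resolution.

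Because the proof reduces to a comparison of bullet points in two adjacent definitions, there is no genuine obstacle to overcome; the only mild point to keep in mind is to not confuse Reedy weak equivalences with levelwise weak equivalences at intermediate stages (they coincide in this setting, since both Reedy structures on $\mathcal{M}^{\Delta}$ and $\mathcal{M}^{\Delta^{op}}$ have levelwise weak equivalences), and to verify that the isomorphism condition at level zero that is special to frames does not contradict the cofibrancy/fibrancy required of a resolution. Both checks are immediate, so the proposition follows at once from the definitions.
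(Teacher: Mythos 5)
Your proof is correct: the statement is an immediate unwinding of Definitions 1.7 and 1.8 (Reedy weak equivalence to $cc_*X$ resp.\ $cs_*X$ plus the cofibrancy/fibrancy clause triggered by the hypothesis on $X$), which is exactly the intended argument. The paper itself gives no proof and simply cites Hirschhorn (Proposition 16.6.3), whose proof is the same definitional comparison you carried out.
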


In a model category $\mathcal{M}$, one can define homotopy mapping spaces $Map_{\mathcal{M}}(-,-)$, which are simplicial sets
equipped with a composition law defined up to homotopy. There are two possible definitions.
We can take either $Map_{\mathcal{M}}(X,Y)=Mor_{\mathcal{M}}(X\otimes\Delta^{\bullet},Y)$ where $(-)\otimes\Delta^{\bullet}$ is a cosimplicial resolution,
or $Map_{\mathcal{M}}(X,Y)=Mor_{\mathcal{M}}(X,Y^{\Delta^{\bullet}})$ where $(-)^{\Delta^{\bullet}}$ is a simplicial resolution.
When $X$ is cofibrant and $Y$ is fibrant, these two definitions give the same homotopy type of mapping space
and have also the homotopy type of Dwyer-Kan's hammock localization $L^H(\mathcal{M},w\mathcal{M})(X,Y)$ where $w\mathcal{M}$ is the subcategory of weak equivalences of $\mathcal{M}$ (see \cite{DK3}). Moreover, the set of connected components $\pi_0Map_{\mathcal{M}}(X,Y)$ is the set of homotopy classes $[X,Y]_{\mathcal{M}}$ in $Ho(\mathcal{M})$.

\begin{prop}[{\cite[Corollary 16.5.3]{Hir} and \cite[Corollary 16.5.4]{Hir}}]
Let $\mathcal{M}$ be a model category and $C$ a cosimplicial resolution in $\mathcal{M}$.

(1) If $Y$ is a fibrant object of $\mathcal{M}$, then $Mor_{\mathcal{M}}(C,Y)$
is a fibrant simplicial set.

(2) If $p:X\twoheadrightarrow Y$ is a fibration in $\mathcal{M}$,
then $p_{*}:Mor_{\mathcal{M}}(C,X)\twoheadrightarrow Mor_{\mathcal{M}}(C,Y)$
is a fibration of simplicial sets, acyclic if $p$ is so.

(3) If $p:X\stackrel{\sim}{\rightarrow}Y$ is a weak equivalence of
fibrant objects in $\mathcal{M}$, then $p_{*}:Mor_{\mathcal{M}}(C,X)\stackrel{\sim}{\rightarrow} Mor_{\mathcal{M}}(C,Y)$
is a weak equivalence of fibrant simplicial sets.
\end{prop}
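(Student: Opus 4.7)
The plan is to recognize $Mor_{\mathcal{M}}(C,-):\mathcal{M}\rightarrow sSet$ as the right adjoint of a Quillen functor and then read off the three statements from standard properties of Quillen pairs. The $n$-simplices of $Mor_{\mathcal{M}}(C,X)$ are $Mor_{\mathcal{M}}(C^{n},X)$, with simplicial operators induced by the cosimplicial structure of $C$. The left adjoint $-\otimes_{\Delta}C:sSet\rightarrow\mathcal{M}$ is given by the coend $K\otimes_{\Delta}C=\int^{[n]\in\Delta}K_{n}\cdot C^{n}$, where $\cdot$ denotes the copower by a set. In particular $\Delta^{n}\otimes_{\Delta}C=C^{n}$, and the boundary inclusion $\partial\Delta^{n}\hookrightarrow\Delta^{n}$ is sent to the $n$-th latching map $L_{n}C\rightarrow C^{n}$.

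The central step is to show that $-\otimes_{\Delta}C$ is left Quillen from $sSet$ with its standard Quillen model structure to $\mathcal{M}$. For cofibrations: every monomorphism of simplicial sets can be filtered by cell attachments of boundary inclusions $\partial\Delta^{n}\hookrightarrow\Delta^{n}$, each of which is sent to a pushout along the latching map $L_{n}C\rightarrow C^{n}$; this is a cofibration in $\mathcal{M}$ because $C$, being a cosimplicial resolution, is Reedy cofibrant. For acyclic cofibrations: one must show that horn inclusions $\Lambda_{k}^{n}\hookrightarrow\Delta^{n}$ are sent to acyclic cofibrations. This is the subtler half and uses the full resolution property, namely that the structural map $C\rightarrow cc_{*}X$ is a Reedy weak equivalence; passing to the latching filtration and comparing with the constant cosimplicial object yields the needed acyclicity.

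Granting that $-\otimes_{\Delta}C\dashv Mor_{\mathcal{M}}(C,-)$ is a Quillen adjunction, parts (1) and (2) are immediate consequences of the right adjoint being right Quillen: it sends fibrant objects to Kan complexes and (acyclic) fibrations to (acyclic) fibrations of simplicial sets. Part (3) is Ken Brown's lemma applied to $Mor_{\mathcal{M}}(C,-)$, which guarantees preservation of weak equivalences between fibrant objects.

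The main difficulty lies in the acyclic cofibration half of the Quillen property: it is here that the cosimplicial resolution hypothesis, rather than mere Reedy cofibrancy, becomes essential. A clean argument requires a skeletal induction on horns analogous to the cofibration case, combined with a careful comparison of the latching objects of $C$ to those of $cc_{*}X$ via the Reedy weak equivalence $C\rightarrow cc_{*}X$, so that the relative latching maps for horn inclusions come out acyclic as well as cofibrant.
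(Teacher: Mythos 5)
The paper offers no argument of its own for this proposition---it simply cites \cite{Hir} (Corollaries 16.5.3 and 16.5.4)---so your proposal has to stand on its own. Its formal skeleton is fine and is essentially the route of the cited source: the adjunction $-\otimes_{\Delta}C\dashv Mor_{\mathcal{M}}(C,-)$, the identification of $\partial\Delta^{n}\otimes_{\Delta}C$ with the latching object so that cofibrations of simplicial sets go to cofibrations, parts (1)--(2) from right Quillen-ness, and part (3) from Ken Brown's lemma are all correct. The genuine gap is that the entire mathematical content of the statement---that $-\otimes_{\Delta}C$ sends horn inclusions to \emph{acyclic} cofibrations, which is precisely where ``cosimplicial resolution'' rather than mere Reedy cofibrancy enters---is acknowledged as ``the main difficulty'' but never proved. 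Moreover the mechanism you gesture at is off target: the structural map $C\rightarrow cc_{*}A$ is only a levelwise weak equivalence, so it gives no control whatsoever on (relative) latching maps, and a latching-by-latching comparison of $C$ with the constant object will not by itself make the images of $\Lambda_{k}^{n}\hookrightarrow\Delta^{n}$ acyclic.

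A correct completion runs differently: for each simplicial set $K$, the functor $K\otimes_{\Delta}-$ from cosimplicial objects with the Reedy structure to $\mathcal{M}$ is left Quillen (pushout-product with $\emptyset\rightarrow K$, proved by cellular induction on $K$ using the latching description), hence by Ken Brown's lemma it preserves weak equivalences between Reedy cofibrant cosimplicial objects. After replacing the resolved object by a cofibrant model $A'$ (lift $C\rightarrow cc_{*}A$ through the Reedy acyclic fibration $cc_{*}A'\rightarrow cc_{*}A$), the constant object $cc_{*}A'$ is Reedy cofibrant and $K\otimes_{\Delta}cc_{*}A'\cong\pi_{0}(K)\cdot A'$, so horn inclusions become isomorphisms there; applying the naturality square for $\Lambda_{k}^{n}\subset\Delta^{n}$ over $C\rightarrow cc_{*}A'$ and two-out-of-three yields the desired acyclicity, and with it the full Quillen property. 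This missing step is in substance Hirschhorn's homotopy lifting extension theorem (the result behind the corollaries the paper cites); without it your argument establishes only the cofibration half, i.e. none of (1), (2), (3) as stated.
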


\begin{defn}
Let $P$ be a cofibrant dg prop in $\Ch$.
Let $X$ be a chain complex. The \emph{moduli space of $P$-algebra structures} on $X$
is the simplicial set defined by
\[
P\{X\}=Mor_{Prop}(P\otimes\Delta^{\bullet},End_X),
\]
where $(-)\otimes\Delta^{\bullet}$ is a functorial cosimplicial frame on $Prop$.
We get a functor
\begin{align*}
Prop\rightarrow & sSet\\
P\mapsto & P\{X\}
\end{align*}
where $sSet$ is the category of simplicial sets.
\end{defn}
We can already get two interesting properties of these moduli
spaces:
\begin{lem}
Let $P$ be a cofibrant dg prop. For any chain complex $X$, the moduli space
$P\{X\}$ is a fibrant simplicial set.
\end{lem}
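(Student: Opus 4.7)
The plan is to apply part (1) of the Hirschhorn proposition quoted just above (Corollary 16.5.3 of \cite{Hir}), which says that $Mor_{\mathcal{M}}(C,X)$ is a fibrant simplicial set whenever $C$ is a cosimplicial resolution and $X$ is a fibrant object of $\mathcal{M}$. Here we take $\mathcal{M}$ to be the (semi-)model category of props and $C = P \otimes \Delta^\bullet$, so it suffices to check two things: that $P \otimes \Delta^\bullet$ is a cosimplicial resolution of $P$ in $\mathcal{P}$, and that $End_X$ is fibrant in $\mathcal{P}$.

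For fibrancy of $End_X$, I would simply observe that fibrations in $\mathcal{P}$ are defined componentwise and that every chain complex over a field is fibrant. Since each component $End_X(m,n) = Hom_{Ch_{\mathbb{K}}}(X^{\otimes m}, X^{\otimes n})$ is a chain complex, the map $End_X \to 0$ is componentwise a fibration, so $End_X$ is fibrant. This holds equally well in characteristic zero (full model structure) and in positive characteristic (semi-model structure on props with non-empty inputs or outputs).

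For the cosimplicial resolution claim, the tensoring $P \otimes \Delta^\bullet$ is the standard cosimplicial frame on $P$ obtained by tensoring with (the normalized chain complex of) the simplicial model $\Delta^\bullet$; it comes equipped with the canonical augmentation $P \otimes \Delta^\bullet \to cc_* P$ which is a Reedy weak equivalence with $(P \otimes \Delta^\bullet)^0 = P$. By Proposition 16.6.3 of \cite{Hir}, applied in the (semi-)model category $\mathcal{P}$, any cosimplicial frame on a cofibrant object is a cosimplicial resolution; since $P$ is cofibrant by hypothesis, this yields that $P \otimes \Delta^\bullet$ is a cosimplicial resolution of $P$.

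The only mild obstacle is to make sure these general model-category facts still apply in the semi-model setting of positive characteristic, but since the cofibrancy of $P$ is precisely the hypothesis needed for the relevant factorization and lifting axioms of the semi-model structure to kick in, this causes no problem. Putting the two ingredients together and invoking Proposition 16.5.3(1) of \cite{Hir} gives that $P\{X\} = Mor_{\mathcal{P}}(P \otimes \Delta^\bullet, End_X)$ is a fibrant simplicial set, as claimed.
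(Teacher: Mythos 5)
Your proposal is correct and follows the same route as the paper: the paper's own proof likewise observes that $End_X$ is a fibrant prop because every chain complex is fibrant and fibrations of props are componentwise, and then invokes the fibrancy of mapping spaces out of a cosimplicial resolution of the cofibrant prop $P$. You merely spell out the intermediate step (that $P\otimes\Delta^{\bullet}$ is a cosimplicial frame, hence a cosimplicial resolution since $P$ is cofibrant, by Proposition 16.6.3 of Hirschhorn) which the paper leaves implicit.
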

\begin{proof}
Every chain complex is fibrant, and fibration of dg props are defined componentwise, so $End_X$
is a fibrant dg prop. Given that $P$ is cofibrant, the mapping space $P\{X\}$ is
fibrant.
\end{proof}
In this case, the connected components of this moduli space are exactly the homotopy classes of $P$-algebra structures on $X$.

To conclude, let us note that these moduli spaces are a well defined homotopy invariant of algebraic structures over a given object:
\begin{lem}
Let $X$ be a chain complex. Every weak
equivalence of cofibrant dg props $P\stackrel{\sim}{\rightarrow}Q$ gives
rise to a weak equivalence of fibrant simplicial sets
\[
Q\{X\}\stackrel{\sim}{\rightarrow}P\{X\}.
\]
\end{lem}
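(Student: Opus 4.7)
The plan is to recognize $P\{X\}$ and $Q\{X\}$ as homotopy mapping spaces computed via the cosimplicial resolutions $P \otimes \Delta^{\bullet}$ and $Q \otimes \Delta^{\bullet}$, and then invoke the standard homotopy invariance of such mapping spaces in the source variable when the target is fibrant.

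First I would observe that the morphism $\phi : P \stackrel{\sim}{\rightarrow} Q$ induces by precomposition a map of simplicial sets $\phi^* : Q\{X\} \rightarrow P\{X\}$, and that both sides are fibrant simplicial sets by the previous lemma (both $P$ and $Q$ being cofibrant, and $End_X$ fibrant). Next, since $P$ and $Q$ are cofibrant in the (semi-)model category of props, the functorial cosimplicial frames $P \otimes \Delta^{\bullet}$ and $Q \otimes \Delta^{\bullet}$ used to define the moduli spaces are in fact cosimplicial resolutions of $P$ and $Q$, by Propositions 1.8 and 1.9 applied in $\mathcal{P}$. Consequently, the induced morphism $\phi \otimes \Delta^{\bullet} : P \otimes \Delta^{\bullet} \rightarrow Q \otimes \Delta^{\bullet}$ is a Reedy weak equivalence between Reedy cofibrant objects of $\mathcal{P}^{\Delta}$.

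Finally, I would invoke the dual of Proposition 1.10(3), namely Corollary 16.5.5 of \cite{Hir}: since $End_X$ is fibrant in $\mathcal{P}$ and $\phi \otimes \Delta^{\bullet}$ is a weak equivalence between Reedy cofibrant cosimplicial objects, the map $\phi^* : Mor_{\mathcal{P}}(Q \otimes \Delta^{\bullet}, End_X) \rightarrow Mor_{\mathcal{P}}(P \otimes \Delta^{\bullet}, End_X)$ is a weak equivalence of fibrant simplicial sets, which is exactly the statement to prove. (Alternatively, one may identify $P\{X\}$ with the Dwyer--Kan hammock localization $L^{H}(\mathcal{P}, w\mathcal{P})(P, End_X)$ on cofibrant-fibrant inputs, and use its known homotopy invariance in each variable.) The only point requiring mild care, rather than a genuine obstacle, is that in positive characteristic one works within the semi-model structure on $\mathcal{P}_0$; but since cofibrant frames and resolutions are built from cofibrations with cofibrant domain, the standard arguments apply verbatim under our hypothesis that $P$ and $Q$ are cofibrant.
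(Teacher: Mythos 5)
Your proposal is correct and follows essentially the same route as the paper: identify $P\otimes\Delta^{\bullet}$ and $Q\otimes\Delta^{\bullet}$ as cosimplicial resolutions of the cofibrant props, note that $\varphi$ induces a Reedy weak equivalence between them (the paper cites Proposition 16.1.24 of Hirschhorn for this step), and conclude from the fibrancy of $End_X$ via Corollary 16.5.5 of Hirschhorn that $Q\{X\}\stackrel{\sim}{\rightarrow}P\{X\}$ is a weak equivalence of fibrant simplicial sets. No substantive difference from the paper's argument.
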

\begin{proof}
Let $\varphi:P\rightarrow Q$ be a weak equivalence of cofibrant dg props.
According to \cite[Proposition 16.1.24]{Hir}, the map $\varphi$
induces a Reedy weak equivalence of cosimplicial resolutions $P\otimes\Delta^{\bullet}\stackrel{\sim}{\rightarrow}Q\otimes\Delta^{\bullet}$.
The dg prop $End_X$ is fibrant,
so we conclude by \cite[Corollary 16.5.5]{Hir} that this weak equivalence of cosimplicial resolutions
induces a weak equivalence between the corresponding moduli spaces.
\end{proof}

\begin{rem}
The reader may have noticed that, using the existence of functorial cosimplicial resolutions,
Definition 1.12, Lemma 1.13 and Lemma 1.14 could have been stated without the cofibrancy assumption on $P$. In this case, let
\[
P^{\bullet}\stackrel{\sim}{\rightarrow} cc^{\bullet}P
\]
be such a cosimplicial resolution of a dg prop $P$, and
\[
\widetilde{P\{X\}}=Mor_{Prop}(P^{\bullet},End_X)
\]
be this alternative construction of the moduli space.
Let
\[
P_{\infty}\stackrel{\sim}{\rightarrow}P
\]
be a functorial cofibrant resolution of $P$. Then a cosimplicial frame on $P_{\infty}$ is a cosimplicial resolution of $P_{\infty}$ by Proposition 1.10, hence a cosimplicial resolution of $P$ as well. By \cite[Proposition 16.1.13]{Hir}, any two cosimplicial resolutions of a given object are related by a zigzag whose middle object is a fibrant cosimplicial resolution,
and by \cite[Corollary 16.5.5]{Hir} a Reedy weak equivalence of cosimplicial resolutions induce a weak equivalence of mapping spaces, hence
\[
\widetilde{P\{X\}}=Mor_{Prop}(P^{\bullet},End_X) \simeq Mor_{Prop}(P_{\infty}\otimes\Delta^{\bullet},End_X) = P_{\infty}\{X\}.
\]
Our alternative construction of a moduli space directly from a dg prop $P$ thus has the homotopy type of the moduli space of homotopy $P$-algebra structures constructed in Definition 1.12 from a cofibrant resolution of $P$.
\end{rem}

\section{Dg categories associated to colored dg props}

\subsection{Colored dg props as symmetric monoidal dg categories}

We revisit the definition of colored dg props by explaining how they can alternatively be defined as symmetric monoidal dg categories ``monoidally'' generated by the set of colors.
We start with two simple examples before explaining the general construction.
\begin{example}
Any dg prop in $\Ch$ can alternatively be defined as a dg monoidal category $\mathtt{cat}(P)$
such that $ob(\mathtt{cat}(P))=\{x^{\otimes n},n\in\mathbb{N}\}$ (where $x$ is a formal variable), the tensor product
is given by $x^{\otimes m}\otimes x^{\otimes n}=x^{\otimes (m+n)}$ and the complexes of morphisms by
\[
Hom_{\mathtt{cat}(P)}(x^{\otimes m},x^{\otimes n})=P(m,n).
\]
The category of $P$-algebras consists of enriched symmetric monoidal functors
\[
\mathtt{cat}(P)\rightarrow \Ch
\]
with their natural transformations.
\end{example}

\begin{example}
Let $P$ be a ($1$-colored) dg prop. There exists a $2$-colored dg prop $P_{x\rightarrow y}$
such that the category of $P_{x\rightarrow y}$-algebras is the category of morphisms $f:X\rightarrow Y$
in the category of $P$-algebras $\Ch^P$.
It has two colors $x,y$ and it is generated for the composition products by
$P(x,\cdots,x;x,\cdots,x)=P(m,n)$, by $P(y,\cdots,y;y,\cdots,y)=P(m,n)$, and by an element $f\in P(x,y)$ which represents
an abstract arrow $f:x\rightarrow y$.
The associated dg monoidal category $\mathtt{cat}(P_{x\rightarrow y})$ is defined in the following way. Let $Free_{mon}(x,y)$ be the monoid freely generated by the two generators $x$ and $y$, i.e the set of words in two letters $w\in Free_{mon}(x,y)$.
Then the objects of $\mathtt{cat}(P_{x\rightarrow y})$ are the "monoidal words"
\[
ob(\mathtt{cat}(P_{x\rightarrow y}))=\{w_{\otimes}(x,y),w\in Free_{mon}(x,y)\}
\]
where $w_{\otimes}(x,y)$ is the formal tensor product corresponding to the word $w$.
The complexes of morphisms are
\[
Hom_{\mathtt{cat}(P_{x\rightarrow y})}(w_{\otimes}(x,y), v_{\otimes}(x,y))=P_{x\rightarrow y}(\underline{w},\underline{v})
\]
where $\underline{w}$ is the ordered sequence of letters, i.e colors, appearing in the word $w$.
Algebras over $P_{x\rightarrow y}$ are enriched symmetric monoidal functors $\mathtt{cat}(P_{x\rightarrow y})\rightarrow \Ch$.
A $P_{x\rightarrow y}$-algebra is equivalent to a diagram of $P$-algebras $\{\bullet\rightarrow\bullet\}\rightarrow \Ch^P$.
\end{example}
These constructions can be generalized to arbitrary diagrams
\begin{defn}
Let $\mathcal{I}$ be a small category. Then there exists a $ob(\mathcal{I})$-colored dg prop $P_{\mathcal{I}}$
consisting of abstract objects $x_i$ associated to  $i\in\mathcal{I}$, and the morphisms of $P_{\mathcal{I}}$ are generated by operations $p\in P_{\mathcal{I}}(x_i^{\otimes m},x_i^{\otimes n})$ associated to each $p\in P(m,n)$ and each variable $x_i$, as well as abstract arrows $f: x_i\rightarrow x_j$ associated to the morphisms of $\mathcal{I}$.
The corresponding dg monoidal category $\mathtt{cat}(P_{\mathcal{I}})$ is defined as follows:
\[
ob(\mathtt{cat}(P_{\mathcal{I}}))=\{w_{\otimes}(x_i,i\in ob(\mathcal{I})),w\in Free_{mon}(x_i,i\in ob(\mathcal{I}))\}.
\]
The tensor product is defined by
\[
w_{\otimes}(x_i,i\in ob(\mathcal{I}))\otimes v_{\otimes}(x_i,i\in ob(\mathcal{I}))=(w*v)_{\otimes}(x_i,i\in ob(\mathcal{I})).
\]
The complexes of morphisms are
\[
Hom_{\mathtt{cat}(P_{\mathcal{I}})}(w_{\otimes}(x_i,i\in ob(\mathcal{I})), v_{\otimes}(x_i,i\in ob(\mathcal{I})))=P_{\mathcal{I}}(\underline{w},\underline{v}).
\]
The composition on the dg hom is the vertical composition product of $P_{\mathcal{I}}$, and the tensor
product of morphisms is the horizontal composition product of $P_{\mathcal{I}}$.
\end{defn}
In other words, the category $\mathtt{cat}(P_{\mathcal{I}})$ is a differential graded monoidal category monoidally generated
on objects by the set of colours of $P_{\mathcal{I}}$. This can be generalized in any symmetric monoidal category, giving
an alternative definition of a colored dg prop:
\begin{defn}
(1) A $C$-colored dg prop is a small symmetric monoidal dg category monoidally generated by $C$.

(2) A $P_{\mathcal{I}}$-algebra is a symmetric monoidal dg functor $\mathtt{cat}(P_{\mathcal{I}})\rightarrow \Ch$.
\end{defn}
\begin{prop}
A $P_{\mathcal{I}}$-algebra corresponds to an $\mathcal{I}$-diagram of $P$-algebras.
\end{prop}
This result follows from the construction of $P_{\mathcal{I}}$ in terms of generators and relations.
For more details we refer the reader to \cite[Section 2]{Mar1}, where such a construction is carried out in the case of colored dg operads.

\subsection{Categories of universal twisted sums and functorial diagrams of algebras}

Let $P_{\mathcal{J}}$ be a colored prop on a small category $\mathcal{J}$.
The category $\mathtt{cat}(P_{\mathcal{J}})$ reflects the universal structures of the symmetric monoidal category defined by a $P$-algebra in the category of chain complexes.
But for some constructions of homotopy theory, we need operations of the ambient category of chain complexes which lie outside the image of this category $\mathtt{cat}(P_{\mathcal{J}})$. Namely, we need to perform direct sums $C\oplus D$, suspensions $\Sigma C$, and twisted complexes $(C,d)$ which we form by adding a twisting homomorphism $d\in Hom(C,C)$ to the internal differential of a chain complex $\delta: C\rightarrow C$. These operations can clearly not be formed within the image of $\mathtt{cat}(P_{\mathcal{J}})$ in the category of chain complexes in general.
Therefore, we define a universal enriched category $\mathtt{TwSum}(P_{\mathcal{J}})$ generated by the formal image of the tensor products $w(x_j,j\in \mathcal{J})\in ob(\mathtt{cat}(P_{\mathcal{J}}))$ under such direct sum, suspension, and twisting operations. If we put all these operations together, then we get the notion of a twisted direct sum which we formalize in our definition. Let us simply mention that we use formal tensor products $\mathbb{K} e\otimes V$, where $\mathbb{K} e$ is the free $\mathbb{K}$-module spanned by a homogeneous element of degree $d = \deg(e)$ to materialize a $d$-fold suspension operation $\Sigma^d: C\mapsto\Sigma^d C$.
In the sequel, our idea is to define universal models of the homotopical construction which we need to work out our problems in this enriched category $\mathtt{TwSum}(P_{\mathcal{J}})$.

\subsubsection{Construction of the category of universal twisted sums}

Let $\mathcal{J}$ be a small category and $P_{\mathcal{J}}$ the associated $ob(\mathcal{J})$-colored dg prop.
Our goal is to build from $\mathtt{cat}(P_{\mathcal{J}})$ a certain dg monoidal category $\mathtt{TwSum}(P_{\mathcal{J}})$ called
its category of universal twisted sums. The objects are pairs
\[
\left( \bigoplus_{\underline{\alpha}\in A} (\mathbb{K}\mathbb{e}_{\underline{\alpha}})\otimes (x_{\alpha_1}\otimes\cdots\otimes x_{\alpha_n}),\mathtt{tw} \right)
\]
where
\begin{itemize}
\item the first term $\bigoplus_{\underline{\alpha}\in A}$ is a formal sum over a finite set $A$ of multi-indices $\alpha = 
(\alpha_1, ...,\alpha_n)$ of formal tensor products
$(\mathbb{K}\mathbb{e}_{\underline{\alpha}})\otimes (x_{\alpha_1}\otimes\cdots\otimes x_{\alpha_n})$, where
$x_{\alpha_1}\otimes\cdots\otimes x_{\alpha_n}$ is an object of $\mathtt{cat}(P_{\mathcal{J}})$ and we consider the graded $\mathbb{K}$-module $\mathbb{K}\mathbb{e}_{\underline{\alpha}}$
generated by a homogeneous element $\mathbb{e}_{\underline{\alpha}}$ of a certain degree $d_{\underline{\alpha}}=deg(\mathbb{e}_{\underline{\alpha}})$;

\item The second term represents a collection of homomorphisms
\begin{equation*}
\mathtt{tw}_{\underline{\alpha}\,\underline{\beta}}
\in\mathbb{e}_{\underline{\alpha}}\otimes\mathbb{e}_{\underline{\beta}}^{\vee}\otimes Hom_{\mathtt{cat}(P_{\mathcal{J}})}(x_{\beta_1}\otimes\dots\otimes x_{\beta_m},
x_{\alpha_1}\otimes\dots\otimes x_{\alpha_n}),
\end{equation*}
indexed by the couples $(\underline{\alpha},\underline{\beta})\in 
A^2$, homogeneous of degree $-1$, and that satisfy the relation of 
twisting cochains:
\begin{equation*}
\delta(\mathtt{tw}_{\underline{\alpha}\underline{\beta}})+
\sum_{\underline{\gamma}\in 
A}\mathtt{tw}_{\underline{\alpha}\underline{\gamma}}\circ\mathtt{tw}_{\underline{\gamma}\underline{\beta}}=0
\end{equation*}
in the dg-module $\mathbb{e}_{\underline{\alpha}}\otimes\mathbb{e}_{\underline{\beta}}^{\vee}\otimes Hom_{\mathtt{cat}(P_{\mathcal{J}})}(x_{\beta_1}\otimes\dots\otimes x_{\beta_m},
x_{\alpha_1}\otimes\dots\otimes x_{\alpha_n})$,
for every couple of sequences of colours $(\underline{\alpha},\underline{\beta})\in A^2$.
The notation $\mathbb{e}_{\underline{\beta}}^{\vee}$ represents an element dual to $\mathbb{e}_{\underline{\beta}}$,
homogeneous of degree $\deg(\mathbb{e}_{\underline{\beta}}^{\vee}) = -\deg(\mathbb{e}_{\underline{\beta}})$,
and we use the relation $\mathbb{e}_{\underline{\beta}}^{\vee}(\mathbb{e}_{\underline{\beta}}) = 1$
when we form the composites $\mathtt{tw}_{\underline{\alpha}\,\underline{\gamma}}\circ\mathtt{tw}_{\underline{\gamma}\,\underline{\beta}}$.
\end{itemize}
We define the dg-modules of homomorphisms of $\mathtt{TwSum}(P_{\mathcal{J}})$
as the twisted sums of dg-modules
\begin{align*}
Hom_{\mathtt{TwSum}(P_{\mathcal{J}})}(\underbrace{(\oplus_{\underline{\beta}\in B} (\mathbb{K}\mathbb{e}_{\underline{\beta}})\otimes(x_{\beta_1}\otimes\dots\otimes x_{\beta_m}),\mathtt{tw}_L)}_{= L},
\underbrace{(\oplus_{\underline{\alpha}\in A}(\mathbb{K}\mathbb{e}_{\underline{\alpha}})\otimes(x_{\alpha_1}\otimes\dots\otimes x_{\alpha_n}),\mathtt{tw}_K)}_{= K})
\\
:= (\bigoplus_{(\underline{\beta}\,\underline{\alpha})\in B\times A}
\mathbb{K}\mathbb{e}_{\underline{\alpha}}\otimes\mathbb{K}\mathbb{e}_{\underline{\beta}}^{\vee}\otimes Hom_{\mathtt{cat}(P_{\mathcal{J}})}(x_{\beta_1}\otimes\dots\otimes x_{\beta_m},
x_{\alpha_1}\otimes\dots\otimes x_{\alpha_n}),\partial),
\end{align*}
with twisting homomorphism $\partial: (f_{\underline{\beta}\,\underline{\alpha}})\mapsto(\partial(f)_{\underline{\beta}\,\underline{\alpha}})$
such that
\begin{equation*}
\partial(f)_{\underline{\beta}\,\underline{\alpha}}
= \sum_{\underline{\gamma}\in B}\mathtt{tw}_{\underline{\beta}\,\underline{\gamma}}\circ f_{\underline{\gamma}\,\underline{\alpha}}
- \sum_{\underline{\gamma}\in A}sign(f) f_{\underline{\beta}\,\underline{\gamma}}\circ\mathtt{tw}_{\underline{\gamma}\,\underline{\alpha}},
\end{equation*}
for every couple of sequences of colours $(\underline{\alpha},\underline{\beta})$, where $sign(f)$ is a sign depending on $f$.
This endows $\mathtt{TwSum}(P_{\mathcal{J}})$ with a dg category structure:

\begin{proof}
We equip this dg hom $Hom_{\mathtt{TwSum}(P_{\mathcal{J}})}(K,L)$ with the total differential
$\delta +\partial$ where $\delta$ is the internal differential induced by the differential of $P$ and
$\partial$ is the twisting homomorphism. The fact that $(\delta +\partial)^2= 0$
follows from the relation of twisting cochains satisfied by the $\mathtt{tw}$'s with respect to $\delta$.
Indeed, for each $\underline{\beta}\in B,\underline{\alpha}\in A$, we have
\[
(\delta+\partial)^2(f)_{\underline{\beta},\underline{\alpha}} = (\delta(\partial)+\partial^2)(f)_{\underline{\beta},\underline{\alpha}}
\]
where $\delta(\partial)$ is the usual differential of a homomorphism defined by the commutator
\[
\delta(\partial)=\delta\circ\partial - (-1)^{deg(\partial)}\partial\circ\delta = \delta\circ\partial + \partial\circ\delta.
\]
We have
\[
\delta(\partial)(f)_{\underline{\beta},\underline{\alpha}} = \delta(\partial (f)_{\underline{\beta},\underline{\alpha}})+\partial (\delta (f))_{\underline{\beta},\underline{\alpha}} = \delta(\mathtt{tw}_{\underline{\beta},\underline{\alpha}})(f)
\]
and
\begin{eqnarray*}
\partial^2(f)_{\underline{\beta},\underline{\alpha}} & = & \partial (\partial (f))_{\underline{\beta},\underline{\alpha}} \\
 & = & \sum_{\underline{\gamma}\in B}\mathtt{tw}_{\underline{\beta}\,\underline{\gamma}}\circ \partial (f)_{\underline{\gamma}\,\underline{\alpha}}
- \sum_{\underline{\gamma}\in A}sign(\partial(f)) \partial (f)_{\underline{\beta}\,\underline{\gamma}}\circ\mathtt{tw}_{\underline{\gamma}\,\underline{\alpha}} \\
 & = & \sum_{\underline{\gamma}\in B}\mathtt{tw}_{\underline{\beta}\,\underline{\gamma}}\circ \partial(f)_{\underline{\gamma}\,\underline{\alpha}}
+ \sum_{\underline{\gamma}\in A}sign(f) \partial(f)_{\underline{\beta}\,\underline{\gamma}}\circ\mathtt{tw}_{\underline{\gamma}\,\underline{\alpha}} \\
 & = &  \left(\sum_{\underline{\gamma}\in B}\mathtt{tw}_{\underline{\beta}\,\underline{\gamma}}\circ\mathtt{tw}_{\underline{\gamma}\,\underline{\alpha}}\right)(f)
\end{eqnarray*}
because $sign(\partial(f))=sign(f)-1$ (the homomorphism $\partial$ is of degree $-1$),
so
\[
(\delta+\partial)^2(f)_{\underline{\beta},\underline{\alpha}} = \left(\delta(\mathtt{tw}_{\underline{\beta},\underline{\alpha}})+
\sum_{\underline{\gamma}\in B}\mathtt{tw}_{\underline{\beta}\,\underline{\gamma}}\circ\mathtt{tw}_{\underline{\gamma}\,\underline{\alpha}}\right)(f) = 0.
\]
For each object
\[
K=(\oplus_{\underline{\alpha}}\mathbb{K}\mathbb{e}_{\underline{\alpha}}\otimes(x_{\alpha_1}\otimes\dots\otimes x_{\alpha_n}),\mathtt{tw}_K)
\]
of $\mathtt{TwSum}(P_{\mathcal{J}})$, the associated identity element in the dg hom $Hom_{\mathtt{TwSum}(P_{\mathcal{J}})}(K,K)$ is the $0$-cycle defined by
\[
\oplus_{\underline{\alpha}}(\mathbb{K}\mathbb{e}_{\underline{\alpha}})\otimes\mathbb{K}\mathbb{e}_{\underline{\alpha}}^{\vee}\otimes id_{x_{\alpha_1}\otimes\dots\otimes x_{\alpha_n}},
\]
where $id_{x_{\alpha_1}\otimes\dots\otimes x_{\alpha_n}}$ is the identity on the object $x_{\alpha_1}\otimes\dots\otimes x_{\alpha_n}$ of $\mathtt{cat}(P_{\mathcal{J}})$.
The composition law
\[
Hom_{\mathtt{TwSum}(P_{\mathcal{J}})}(K,L)\otimes Hom_{\mathtt{TwSum}(P_{\mathcal{J}})}(L,M)\rightarrow Hom_{\mathtt{TwSum}(P_{\mathcal{J}})}(K,M)
\]
on such dg homs is then defined by the composition of dg homs in $\mathtt{cat}(P_{\mathcal{J}})$ and the relation
$\mathbb{e}_{\underline{\alpha}}^{\vee}(\mathbb{e}_{\underline{\alpha}}) = 1$ on matching colors.
The compatibility of this composition with the twisted differentials of the dg homs is automatic.
\end{proof}

\subsubsection{The tensor structure on a category of universal twisted sums}

The category $\mathtt{TwSum}(P_{\mathcal{J}})$ is equipped with a dg enriched symmetric monoidal structure, defined by the natural distribution formula at the level of objects
\begin{align*}
\underbrace{(\oplus_{\underline{\alpha}} (\mathbb{K}\mathbb{e}_{\underline{\alpha}})\otimes(x_{\alpha_1}\otimes\dots\otimes x_{\alpha_m}),\mathtt{tw}_K)}_{K}
\otimes\underbrace{(\oplus_{\underline{\beta}} ((\mathbb{K}\mathbb{e}_{\underline{\beta}})\otimes(x_{\beta_1}\otimes\dots\otimes x_{\beta_n}),\mathtt{tw}_L)}_{L}
\\
:= \underbrace{(\oplus_{\underline{\alpha},\underline{\beta}} ((\mathbb{K}\mathbb{e}_{\underline{\alpha}}\otimes\mathbb{e}_{\underline{\beta}})
\otimes(x_{\alpha_1}\otimes\dots\otimes x_{\alpha_m}\otimes x_{\beta_1}\otimes\dots\otimes x_{\beta_n}),\mathtt{tw}_K\otimes id + id\otimes\mathtt{tw}_L)}_{= K\otimes L},
\end{align*}
and where we use the horizontal compositions
\begin{multline*}
\begin{aligned} & \mathbb{K}\mathbb{e}_{\underline{\gamma}}\otimes \mathbb{K}\mathbb{e}_{\underline{\alpha}}^{\vee}\otimes Hom_{\mathtt{cat}(P_{\mathcal{J}})}(x_{\alpha_1}\otimes\dots\otimes x_{\alpha_m},
x_{\gamma_1}\otimes\dots\otimes x_{\gamma_p})
\\
\otimes & \mathbb{K}\mathbb{e}_{\underline{\delta}}\otimes\mathbb{K}\mathbb{e}_{\underline{\beta}}^{\vee}\otimes Hom_{\mathtt{cat}(P_{\mathcal{J}})}(x_{\beta_1}\otimes\dots\otimes x_{\beta_n},
x_{\delta_1}\otimes\dots\otimes x_{\delta_q})
\end{aligned}
\\
\xrightarrow{\otimes}\begin{aligned}[t] & (\mathbb{K}\mathbb{e}_{\underline{\gamma}}\otimes\mathbb{K}\mathbb{e}_{\underline{\delta}})
\otimes(\mathbb{K}\mathbb{e}_{\underline{\alpha}}\otimes\mathbb{K}\mathbb{e}_{\underline{\beta}})^{\vee}
\\
\otimes & Hom_{\mathtt{cat}(P_{\mathcal{J}})}(x_{\alpha_1}\otimes\dots\otimes x_{\alpha_m}\otimes x_{\beta_1}\otimes\dots\otimes x_{\beta_n},
x_{\gamma_1}\otimes\dots\otimes x_{\gamma_p}\otimes x_{\delta_1}\otimes\dots\otimes x_{\delta_q})
\end{aligned}
\end{multline*}
to define the formal twisted cochain $\mathtt{tw}_K\otimes id + id\otimes\mathtt{tw}_L$ of this object $K\otimes L$.
An analogous construction holds at the level of homomorphisms.

Each object $x_{\alpha_1}\otimes\dots\otimes x_{\alpha_n}\in \mathtt{cat}(P_{\mathcal{J}})$
is naturally identified with the trivial twisted sum $K = (\mathbb{K}\mathbb{e}_0\otimes(x_{\alpha_1}\otimes\dots\otimes x_{\alpha_n}),0)$ in $\mathtt{TwSum}(P_{\mathcal{J}})$, where $\deg(\mathbb{e}_0) = 0\Rightarrow\mathbb{K} \mathbb{e}_0 = \mathbb{K}$.
In particular, to each $x_{\alpha_i}$ corresponds a trivial twisted sum $K_{\alpha_i} = (\mathbb{K}\mathbb{e}_0\otimes x_{\alpha_i},0)$. This defines a functor
\[
\mathtt{cat}(P_{\mathcal{J}})\rightarrow \mathtt{TwSum}(P_{\mathcal{J}}).
\]
The category of universal twisted sums satisfies the following universal property with respect to this functor:
\begin{lem}
For every symmetric monoidal dg functor $R:\mathtt{cat}(P_{\mathcal{J}})\rightarrow \Ch$ (that is, every
$P_{\mathcal{J}}$-algebra), there exists a canonical factorization
\[
\xymatrix{
\mathtt{cat}(P_{\mathcal{J}}) \ar[d] \ar[r]^-R & \Ch \\
\mathtt{TwSum}(P_{\mathcal{J}}) \ar[ur]_-{\tilde{R}} &
}.
\]
\end{lem}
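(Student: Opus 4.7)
The plan is to define $\tilde{R}$ by applying $R$ componentwise to the twisted sums. On objects, $\tilde{R}$ sends a universal twisted sum $(\bigoplus_{\underline{\alpha}} (\mathbb{K}\mathbb{e}_{\underline{\alpha}})\otimes (x_{\alpha_1}\otimes\dots\otimes x_{\alpha_n}),\partial)$ to the genuine twisted sum of chain complexes $(\bigoplus_{\underline{\alpha}} (\mathbb{K}\mathbb{e}_{\underline{\alpha}})\otimes R(x_{\alpha_1})\otimes\dots\otimes R(x_{\alpha_n}), R(\partial))$ in $Ch_{\mathbb{K}}$, where $R(\partial)$ denotes the family of homomorphisms obtained by applying $R$ to the hom-component of each $\partial_{\underline{\alpha}\,\underline{\beta}}$. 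On morphisms, $\tilde{R}$ similarly applies $R$ hom-component by hom-component to the collection $(f_{\underline{\beta}\,\underline{\alpha}})$ that represents a morphism of $\mathtt{TwSum}(P_{\mathcal{J}})$. The inclusion $cat(P_{\mathcal{J}})\hookrightarrow\mathtt{TwSum}(P_{\mathcal{J}})$ sends an object $x_{\alpha_1}\otimes\dots\otimes x_{\alpha_n}$ to the untwisted pair with a single summand of degree $0$ and $\partial=0$, so that on this image $\tilde{R}$ visibly restricts to $R$, which gives the commutation of the triangle.

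The main verification is that $\tilde{R}$ so defined is a well-defined symmetric monoidal dg functor. First, because $R$ is a dg functor, it commutes with the internal differential $\delta$ on the hom-complexes of $cat(P_{\mathcal{J}})$ and preserves composition; applying it to the torsion relation $\delta(\partial_{\underline{\alpha}\,\underline{\beta}}) + \sum_{\underline{\gamma}} \partial_{\underline{\alpha}\,\underline{\gamma}}\circ\partial_{\underline{\gamma}\,\underline{\beta}} = 0$ yields the identical relation for $R(\partial)$, so $\tilde{R}(K)$ really is a chain complex under its total twisted differential $\delta + R(\partial)$. The same argument applied to the twisting formula $\partial(f)_{\underline{\alpha}\,\underline{\beta}} = \sum_{\underline{\gamma}}(\partial_{\underline{\alpha}\,\underline{\gamma}}\circ f_{\underline{\gamma}\,\underline{\beta}} - \pm f_{\underline{\alpha}\,\underline{\gamma}}\circ\partial_{\underline{\gamma}\,\underline{\beta}})$ shows that $\tilde{R}$ commutes with the twisting differential on hom-modules, hence respects $0$-cycles, i.e. sends plain morphisms to plain morphisms. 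Functoriality on composition, and compatibility with the symmetric monoidal structure defined through the horizontal compositions of $cat(P_{\mathcal{J}})$ and the distribution formula on direct summands, reduce in the same way to the corresponding properties of $R$ itself.

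Canonicity is built into the construction: any symmetric monoidal dg functor $\mathtt{TwSum}(P_{\mathcal{J}})\rightarrow Ch_{\mathbb{K}}$ extending $R$ is forced to send a formal direct sum of summands $(\mathbb{K}\mathbb{e}_{\underline{\alpha}})\otimes(x_{\alpha_1}\otimes\dots\otimes x_{\alpha_n})$ to the genuine direct sum of the shifted tensor products $(\mathbb{K}\mathbb{e}_{\underline{\alpha}})\otimes R(x_{\alpha_1})\otimes\dots\otimes R(x_{\alpha_n})$, equipped with the twisting obtained by applying $R$ componentwise to $\partial$, so the formulas above give the unique possibility. The only mild obstacle I anticipate is bookkeeping with the Koszul signs involved in the composition of twisted morphisms and in the tensor product of twisted sums; once the sign conventions fixed in the definition of $\mathtt{TwSum}(P_{\mathcal{J}})$ are carried through, all verifications boil down to rewriting the defining structural identities with $R$ applied on each hom-component, using that $R$ is dg and symmetric monoidal.
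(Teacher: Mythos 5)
Your construction is exactly the one the paper uses: define $\tilde{R}$ by applying $R$ componentwise, sending a formal twisted sum to the genuine twisted sum of the complexes $R(x_{\alpha_j})$ in $Ch_{\mathbb{K}}$, with the triangle commuting because objects of $cat(P_{\mathcal{J}})$ embed as trivial (untwisted) sums. Your added verifications that the dg functoriality of $R$ preserves the torsion relation and the twisted hom differential are correct and only make explicit what the paper leaves implicit.
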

\begin{proof}
We construct $\tilde{R}$ by setting first $\tilde{R}(K_{\alpha_i})=R(x_{\alpha_i})$ so that the diagram commutes.
Then, for any object
\[
\left( \bigoplus_{\underline{\alpha}} (\mathbb{K}\mathbb{e}_{\underline{\alpha}})\otimes (x_{\alpha_1}\otimes\cdots\otimes x_{\alpha_n}),\mathtt{tw} \right)
\]
of $\mathtt{TwSum}(P_{\mathcal{J}})$, we define
\begin{eqnarray*}
 & \tilde{R}\left( \bigoplus_{\underline{\alpha}} (\mathbb{K}\mathbb{e}_{\underline{\alpha}})\otimes (x_{\alpha_1}\otimes\cdots\otimes x_{\alpha_n}),\mathtt{tw} \right) & \\
 =  & \left( \bigoplus_{\underline{\alpha}} (\mathbb{K}\mathbb{e}_{\underline{\alpha}})\otimes (R(x_{\alpha_1})\otimes\cdots\otimes R(x_{\alpha_n})),R(\mathtt{tw}) \right) &
\end{eqnarray*}
where the left-hand term is built with the direct sum and tensor product of $\Ch$.
The differential of $\tilde{R}\left( \bigoplus_{\underline{\alpha}} (\mathbb{K}\mathbb{e}_{\underline{\alpha}})\otimes (x_{\alpha_1}\otimes\cdots\otimes x_{\alpha_n}),\mathtt{tw} \right)$ is then defined on each component of this direct sum
by the sum of the differential of $R(x_{\alpha_1})\otimes\cdots\otimes R(x_{\alpha_n})$ with a twisting cochain $R(\mathtt{tw})$
defined  as follows.
Since $R$ is a symmetric monoidal dg functor, it induces a morphism of chain complexes
\begin{eqnarray*}
R_{x_{\beta_1}\otimes\dots\otimes x_{\beta_m},x_{\alpha_1}\otimes\dots\otimes x_{\alpha_n}}: & & \\
Hom_{\mathtt{cat}(P_{\mathcal{J}})}(x_{\beta_1}\otimes\dots\otimes x_{\beta_m},x_{\alpha_1}\otimes\dots\otimes x_{\alpha_n}) & & \\
 &  \rightarrow & \\
Hom_{\Ch}(R(x_{\beta_1})\otimes\dots\otimes R(x_{\beta_m}),
R(x_{\alpha_1})\otimes\dots\otimes R(x_{\alpha_n})) & &
\end{eqnarray*}
so that we can well define the collection $R(\mathtt{tw})=\{R(\mathtt{tw}_{\underline{\alpha}\,\underline{\beta}})\}_{\underline{\alpha}\,\underline{\beta}}$ by
\begin{eqnarray*}
R(\mathtt{tw}_{\underline{\alpha}\,\underline{\beta}}) & = & R_{x_{\beta_1}\otimes\dots\otimes x_{\beta_m},x_{\alpha_1}\otimes\dots\otimes x_{\alpha_n}}(\mathtt{tw}_{\underline{\alpha}\,\underline{\beta}}) \\
 & \in & \mathbb{e}_{\underline{\alpha}}\otimes\mathbb{e}_{\underline{\beta}}^{\vee}\otimes Hom_{\Ch}(R(x_{\beta_1})\otimes\dots\otimes R(x_{\beta_m}),
R(x_{\alpha_1})\otimes\dots\otimes R(x_{\alpha_n})).\\
\end{eqnarray*}
This collection satisfies the relation of twisting cochains because $R$ is a symmetric monoidal dg functor and the collection
$\{\mathtt{tw}_{\underline{\alpha}\,\underline{\beta}}\}_{\underline{\alpha}\,\underline{\beta}}$ satisfies the relation of twisting cochains in $\mathtt{TwSum}(P_{\mathcal{J}})$.
\end{proof}

\subsubsection{Functorial diagrams of algebras}

Our purpose is to use categories of universal twisted sums to perform
diagrams of dg $P$-algebras ``functorial in their variables'' in a suitable sense.

Recall that the colored dg prop $P_{\mathcal{J}}$ parametrising $\mathcal{J}$-diagrams of $P$-algebras is equivalent
to the datum of a symmetric monoidal dg category $\mathtt{cat}(P_{\mathcal{J}})$. Algebras over $P_{\mathcal{J}}$ are then
strict symmetric monoidal dg functors $\mathtt{cat}(P_{\mathcal{J}})\rightarrow \Ch$, and morphisms of
$P_{\mathcal{J}}$-algebras are natural transformations preserving the strict symmetric monoidal dg structures.
Such a natural transformation corresponds to a natural transformation of $\mathcal{J}$-diagrams of $P$-algebras.

Now let $A,B:\mathtt{cat}(P_{\mathcal{J}})\rightarrow \Ch$ be two such algebras, and $\phi:A\Rightarrow B$
be a strict symmetric monoidal dg natural transformation. Recall that, according to Lemma 2.6, such functors
lift to strict symmetric monoidal dg functors $\tilde{A},\tilde{B}:\mathtt{TwSum}(P_{\mathcal{J}})\rightarrow \Ch$.
We want to prove that such a lift works similarly for symmetric monoidal dg natural transformations between such functors:
\begin{lem}
The natural transformation $\phi$ lifts to
a strict symmetric monoidal dg natural transformation $\tilde{\phi}:\tilde{A}\Rightarrow\tilde{B}$.
\end{lem}
\begin{proof}
To see this, let us first recall from \cite{Kel} the notion of enriched natural transformation in the case where the categories are enriched over $\Ch$. Let $F,G:\mathcal{C}\rightarrow \mathcal{D}$ be two dg functors and $Hom_{\mathcal{C}}(-,-)$, $Hom_{\mathcal{D}}(-,-)$ be respectively the dg homs of $\mathcal{C}$ and $\mathcal{D}$. A dg natural transformation $\tau:F\Rightarrow G$ is a collection of chain morphisms $\{\tau(x):\mathbb{K}\rightarrow Hom_{\mathcal{D}}(F(x),G(x))\}_{x\in ob(\mathcal{C})}$, that is of $0$-cycles in the $Hom_{\mathcal{D}}(F(x),G(x))$'s indexed by the objects $x$
of $\mathcal{C}$. For every $x,y\in ob(\mathcal{C})$, this collection makes the following diagram commutative:
\[
\xymatrix{
Hom_{\mathcal{C}}(x,y) \ar[d]^-{\cong}\ar[r]^-{\cong} & Hom_{\mathcal{C}}(x,y)\otimes\mathbb{K} \ar[d]^-{G_{x,y}\otimes\tau(x)} \\
\mathbb{K}\otimes Hom_{\mathcal{D}}(x,y) \ar[d]_-{\tau(y)\otimes F_{x,y}} & Hom_{\mathcal{D}}(G(x),G(y))\otimes Hom_{\mathcal{D}}(F(x),G(x)) \ar[d]^-{\circ_{\mathcal{D}}} \\
Hom_{\mathcal{D}}(F(y),G(y))\otimes Hom_{\mathcal{D}}(F(x),F(y)) \ar[r]^-{\circ_{\mathcal{D}}} & Hom_{\mathcal{D}}(F(x),G(y))}.
\]
For any object
\[
K=\left( \bigoplus_{\underline{\alpha}} (\mathbb{K}\mathbb{e}_{\underline{\alpha}})\otimes (x_{\alpha_1}\otimes\cdots\otimes x_{\alpha_n}),\mathtt{tw}_K \right)
\]
of $\mathtt{TwSum}(P_{\mathcal{J}})$, we define the associated $0$-cycle $\tilde{\phi}$ in
\[
Hom_{\Ch}((\bigoplus_{\underline{\alpha}} (\mathbb{K}\mathbb{e}_{\underline{\alpha}})\otimes (A(x_{\alpha_1})\otimes\cdots\otimes A(x_{\alpha_n})),A(\mathtt{tw}_K)),
(\bigoplus_{\underline{\alpha}} (\mathbb{K}\mathbb{e}_{\underline{\alpha}})\otimes (B(x_{\alpha_1})\otimes\cdots\otimes B(x_{\alpha_n})),B(\mathtt{tw}_K)))
\]
by
\[
\tilde{\phi}(K) = \bigoplus_{\underline{\alpha}} (\mathbb{K}\mathbb{e}_{\underline{\alpha}})\otimes (\phi(x_{\alpha_1})\otimes\cdots\otimes \phi(x_{\alpha_n})).
\]
We have to prove that this form a $0$-cycle, thus that
\[
(\delta+B(\mathtt{tw}_K))\circ\tilde{\phi}(K) = \tilde{\phi}(K)\circ(\delta+A(\mathtt{tw}_K)).
\]
The equality
\[
\delta\circ\tilde{\phi}(K) = \tilde{\phi}(K)\circ\delta
\]
follows from the fact that each $\phi(x_{\alpha_i}):A(x_{\alpha_i})\rightarrow B(x_{\alpha_i})$ is a morphism of chain complexes and the differential $\delta$ is the differential of $B(x_{\alpha_1})\otimes\cdots\otimes B(x_{\alpha_n})$ on the left-hand side of the equality and of $A(x_{\alpha_1})\otimes\cdots\otimes A(x_{\alpha_n})$ on the right-hand side.
The equality
\[
B(\mathtt{tw}_K)\circ\tilde{\phi}(K) = \tilde{\phi}(K)\circ A(\mathtt{tw}_K)
\]
follows from the definition of $A(\mathtt{tw}_K)$ as
\[
A(\mathtt{tw}_K)=\{A_{x_{\beta_1}\otimes\dots\otimes x_{\beta_m},x_{\alpha_1}\otimes\dots\otimes x_{\alpha_n}}((\mathtt{tw}_K)_{\underline{\alpha}\,\underline{\beta}})\}_{\underline{\alpha}\,\underline{\beta}}
\]
(and the same for $B(\mathtt{tw}_K)$), the fact that $\phi$ is a dg natural transformation between $A$ and $B$ and the definition of $\tilde{\phi}(K)$ in function of the $\phi(x_{\alpha_i})$'s.

Concerning the monoidality of our collection of $0$-cycles $\{\tilde{\phi}(K)\}_{K\in ob(\mathtt{TwSum}(P_{\mathcal{J}}))}$, recall from 2.2.2 that the tensor product of two objects of $\mathtt{TwSum}(P_{\mathcal{J}})$ is defined by
\begin{align*}
\underbrace{(\oplus_{\underline{\alpha}} (\mathbb{K}\mathbb{e}_{\underline{\alpha}})\otimes(x_{\alpha_1}\otimes\dots\otimes x_{\alpha_m}),\mathtt{tw}_K)}_{K}
\otimes\underbrace{(\oplus_{\underline{\beta}} ((\mathbb{K}\mathbb{e}_{\underline{\beta}})\otimes(x_{\beta_1}\otimes\dots\otimes x_{\beta_n}),\mathtt{tw}_L)}_{L}
\\
:= \underbrace{(\oplus_{\underline{\alpha},\underline{\beta}} ((\mathbb{K}\mathbb{e}_{\underline{\alpha}}\otimes\mathbb{e}_{\underline{\beta}})
\otimes(x_{\alpha_1}\otimes\dots\otimes x_{\alpha_m}\otimes x_{\beta_1}\otimes\dots\otimes x_{\beta_n}),\mathtt{tw}_K\otimes id + id\otimes\mathtt{tw}_L)}_{= K\otimes L}
\end{align*}
and that the functors $\tilde{A},\tilde{B}:\mathtt{TwSum}(P_{\mathcal{J}})\rightarrow \Ch$ associated to $A,B:\mathtt{cat}(P_{\mathcal{J}})\rightarrow \Ch$ are defined by
\begin{eqnarray*}
 & \tilde{A}\left( \bigoplus_{\underline{\alpha}} (\mathbb{K}\mathbb{e}_{\underline{\alpha}})\otimes (x_{\alpha_1}\otimes\cdots\otimes x_{\alpha_n}),\mathtt{tw} \right) & \\
 =  & \left( \bigoplus_{\underline{\alpha}} (\mathbb{K}\mathbb{e}_{\underline{\alpha}})\otimes (A(x_{\alpha_1})\otimes\cdots\otimes A(x_{\alpha_n})),A(\mathtt{tw}) \right) & .
\end{eqnarray*}
We have natural isomorphisms
\[
a_{K\otimes L}:\tilde{A}(K\otimes L)\stackrel{\cong}{\rightarrow} \tilde{A}(K)\otimes \tilde{A}(L)
\]
and
\[
b_{K\otimes L}:\tilde{B}(K\otimes L)\stackrel{\cong}{\rightarrow} \tilde{B}(K)\otimes \tilde{B}(L)
\]
induced by natural isomorphisms
\[
A(\cdot\otimes \cdot)\stackrel{\cong}{\rightarrow} A(\cdot)\otimes A(\cdot)
\]
and
\[
B(\cdot\otimes\cdot)\stackrel{\cong}{\rightarrow} B(\cdot)\otimes B(\cdot)
\]
since $A$ and $B$ are symmetric monoidal functors. We have to check the commutativity of the square below:
\[
\xymatrix{
\tilde{A}(K\otimes L)\ar[d]_-{a_{K\otimes L}}\ar[r]^-{\tilde{\phi}(K\otimes L)} & \tilde{B}(K\otimes L)\ar[d]_-{b_{K\otimes L}} \\
\tilde{A}(K)\otimes \tilde{A}(L)\ar[r]^-{\tilde{\phi}(K)\otimes\tilde{\phi}(L)} & \tilde{B}(K)\otimes \tilde{B}(L)
}.
\]
By construction of $\tilde{A}$, $\tilde{B}$ and $\tilde{\phi}$, which are defined by applying $A$, $B$ and $\phi$ to each variable of the tensor powers defining the objects of $\mathtt{TwSum}(P_{\mathcal{J}})$, this boils down to the commutativity of such a monoidality square for $A$, $B$ and $\phi$, which holds because $\phi$ is a monoidal natural transformation.

The naturality of $\{\tilde{\phi}(K)\}_{K\in ob(\mathtt{TwSum}(P_{\mathcal{J}}))}$ then follows directly from the naturality of $\phi$.
\end{proof}
We consequently get two functors
\[
\tilde{A}_*,\tilde{B}_*:\mathtt{TwSum}(P_{\mathcal{J}})^P\rightarrow \Ch^P
\]
that carry any $P$-algebra in $\mathtt{TwSum}(P_{\mathcal{J}})$,
represented by a symmetric monoidal functor $\tilde{C}: \mathtt{cat}(P)\rightarrow \mathtt{TwSum}(P_{\mathcal{J}})$, to the $P$-algebra in $\Ch$
represented by the composite functors $\tilde{A}\tilde{C},\tilde{B}\tilde{C}: \mathtt{cat}(P)\rightarrow \Ch$.
We also have a natural transformation $\tilde{\phi}_*: \tilde{A}_*\Rightarrow\tilde{B}_*$ between these functors on $P$-algebras.

For any small category $\mathcal{I}$, we get strict symmetric monoidal dg functors
\[
\tilde{A}_*,\tilde{B}_*:(\mathtt{TwSum}(P_{\mathcal{J}})^P)^{\mathcal{I}}\rightarrow (\Ch^P)^{\mathcal{I}}
\]
and a strict symmetric monoidal dg natural transformation $\tilde{\phi}_*:\tilde{A}_*\Rightarrow\tilde{B}_*$.
This transformation consists in a collection of natural transformations of $\mathcal{I}$-diagrams of dg $P$-algebras
\[
\tilde{\phi}_*(Y):\tilde{A}_*(Y)\Rightarrow\tilde{B}_*(Y)
\]
for every $Y\in (\mathtt{TwSum}(P_{\mathcal{J}})^P)^{\mathcal{I}}$.

Thus, whenever we have an $\mathcal{I}$-diagram of $P$-algebras in $\mathtt{TwSum}(P_{\mathcal{J}})$, say $Y$, we can
associate an $\mathcal{I}$-diagram of dg $P$-algebras $\tilde{A}_*(Y)$ to any $\mathcal{J}$-diagram of dg $P$-algebras $A$,
and a natural transformation of $\mathcal{I}$-diagrams of dg $P$-algebras $\tilde{\phi}_*(Y): \tilde{A}_*(Y)\Rightarrow\tilde{B}_*(Y)$ to any natural transformation of $\mathcal{J}$-diagrams of dg $P$-algebras $\phi: A\Rightarrow B$.
This result is equivalent to the following statement:
\begin{prop}
Given an $\mathcal{I}$-diagram $Y$ of $P$-algebras in $\mathtt{TwSum}(P_{\mathcal{J}})$, the above construction determines a functor
\[
(\Ch^P)^{\mathcal{J}}\rightarrow (\Ch^P)^{\mathcal{I}}.
\]
\end{prop}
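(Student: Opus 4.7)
The plan is to read off the functor from the construction already outlined in the paragraphs preceding the statement, and then verify functoriality in $F$. Given $F\in(Ch_{\mathbb{K}}^P)^{\mathcal{J}}$, I view $F$ as a strict symmetric monoidal dg functor $F:cat(P_{\mathcal{J}})\rightarrow Ch_{\mathbb{K}}$. Lemma 2.6 produces the canonical extension $\tilde F:\mathtt{TwSum}(P_{\mathcal{J}})\rightarrow Ch_{\mathbb{K}}$. Postcomposition with $\tilde F$ sends any $P$-algebra $A:cat(P)\rightarrow\mathtt{TwSum}(P_{\mathcal{J}})$ to $\tilde F\circ A:cat(P)\rightarrow Ch_{\mathbb{K}}$, yielding a functor $\tilde F_*:\mathtt{TwSum}(P_{\mathcal{J}})^P\rightarrow Ch_{\mathbb{K}}^P$; evaluating levelwise on the $\mathcal{I}$-diagram $Y$ gives an object $\tilde F_*(Y)\in(Ch_{\mathbb{K}}^P)^{\mathcal{I}}$. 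The candidate functor is therefore $\Phi_Y:F\mapsto\tilde F_*(Y)$.

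The core step is to define $\Phi_Y$ on morphisms. A strict symmetric monoidal dg natural transformation $\phi:F\Rightarrow G$ on $cat(P_{\mathcal{J}})$ extends uniquely to $\tilde\phi:\tilde F\Rightarrow\tilde G$ on $\mathtt{TwSum}(P_{\mathcal{J}})$: on a twisted sum $K=(\oplus_{\underline{\alpha}}(\mathbb{K}\mathbb{e}_{\underline{\alpha}})\otimes(x_{\alpha_1}\otimes\dots\otimes x_{\alpha_n}),\partial)$ one sets $\tilde\phi_K=\oplus_{\underline{\alpha}}\mathrm{id}_{\mathbb{K}\mathbb{e}_{\underline{\alpha}}}\otimes(\phi_{x_{\alpha_1}}\otimes\dots\otimes\phi_{x_{\alpha_n}})$. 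Because the twisting components $\partial_{\underline{\alpha}\,\underline{\beta}}$ are built from morphisms of $cat(P_{\mathcal{J}})$ on which $\phi$ is already natural, this formula intertwines the source and target twisting differentials, so $\tilde\phi_K$ is a bona fide chain map $\tilde F(K)\rightarrow\tilde G(K)$, and naturality with respect to morphisms of $\mathtt{TwSum}(P_{\mathcal{J}})$ reduces to naturality of $\phi$ together with multilinearity in the tensor factors. One then defines $\Phi_Y(\phi)$ to be the natural transformation of $\mathcal{I}$-diagrams $\tilde\phi_*(Y):\tilde F_*(Y)\Rightarrow\tilde G_*(Y)$ obtained by evaluating $\tilde\phi$ at each $Y(i)$.

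It remains to check that $\Phi_Y$ preserves identities and compositions. Both properties follow from the uniqueness of the extension in Lemma 2.6 and from the explicit formula above: $\widetilde{\mathrm{id}_F}=\mathrm{id}_{\tilde F}$ tautologically, and $\widetilde{\psi\circ\phi}=\tilde\psi\circ\tilde\phi$ because on each summand of a twisted sum both sides reduce to the tensor product of the componentwise composites $\psi_{x_{\alpha_j}}\circ\phi_{x_{\alpha_j}}$. Passing to $\mathcal{I}$-diagrams via evaluation at $Y$ preserves these identities, so $\Phi_Y$ is a genuine functor $(Ch_{\mathbb{K}}^P)^{\mathcal{J}}\rightarrow(Ch_{\mathbb{K}}^P)^{\mathcal{I}}$.

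The main obstacle I anticipate is the verification in the previous paragraph that $\tilde\phi_K$ genuinely commutes with the twisting differential $\partial$ on arbitrary twisted sums: one has to unwind the sign conventions used in the formula $\partial(f)_{\underline{\alpha}\,\underline{\beta}}=\sum_{\underline{\gamma}}(\partial_{\underline{\alpha}\,\underline{\gamma}}\circ f_{\underline{\gamma}\,\underline{\beta}}-\pm f_{\underline{\alpha}\,\underline{\gamma}}\circ\partial_{\underline{\gamma}\,\underline{\beta}})$ and confirm that the naturality squares of $\phi$ on the generators $x_{\alpha_i}$ propagate through the horizontal compositions used to form $\partial\otimes\partial$. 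Once this bookkeeping is done, everything else is formal.
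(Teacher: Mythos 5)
Your proposal is correct and follows essentially the same route as the paper: the paper presents this proposition as a summary of the immediately preceding construction, in which $F$ and $\phi$ are lifted via Lemma 2.6 to $\tilde F$, $\tilde G$ and $\tilde\phi$ on $\mathtt{TwSum}(P_{\mathcal{J}})$ and then evaluated on the $\mathcal{I}$-diagram $Y$ of $P$-algebras. Your only addition is making the lift $\tilde\phi$ explicit and noting the compatibility with the twisting differential, which indeed follows from the enriched naturality of $\phi$ against all elements of the hom complexes of $cat(P_{\mathcal{J}})$, including the degree $-1$ components $\partial_{\underline{\alpha}\,\underline{\beta}}$.
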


The main example to which we want to apply this construction is the following.
Let $f:X\rightarrow Y$ be a morphism of chain complexes, then it admits a functorial factorization
by an acyclic cofibration (i.e acyclic injection) followed by a fibration (i.e a surjection).
This factorization is explicitely given by
\[
\Xi(f:X\rightarrow Y):
\xymatrix{
 & & X \\
X \ar@{>->}[]!R+<4pt,0pt>;[r]_-i^-{\sim} \ar@/^/[urr]^{id_X}\ar@/_/[drr]_f & Z \ar@{->>}[ur]^-s \ar@{->>}[dr]_-p & \\
 & & Y
}
\]
where
\[
Z=(\mathbb{K}\mathbb{e}_0\otimes X\oplus \mathbb{K}\mathbb{e}_{01}\otimes Y\oplus \mathbb{K}\mathbb{e}_1\otimes Y,d_Z)
\]
with $deg(\mathbb{e}_0)=deg(\mathbb{e}_1)=0$ and $deg(\mathbb{e}_{01})=-1$.
The differential $d_Z$ can be expressed in this direct sum by the matrix
\[
\begin{pmatrix}
d_X & 0 & 0 \\
f & -d_Y & -id \\
0 & 0 & d_Y \\
\end{pmatrix}
=
\begin{pmatrix}
d_X & 0 & 0 \\
0 & -d_Y & 0 \\
0 & 0 & d_Y \\
\end{pmatrix}
+
\begin{pmatrix}
0 & 0 & 0 \\
f & 0 & -id \\
0 & 0 & 0 \\
\end{pmatrix}
\]
where the first matrix of the sum is the differential of the direct sum $\mathbb{K}\mathbb{e}_0\otimes X\oplus \mathbb{K}\mathbb{e}_{01}\otimes Y\oplus \mathbb{K}\mathbb{e}_1\otimes Y$
and the second is a twisting $\mathtt{tw}_Z$, a map of degree $-1$ satisfying $\mathtt{tw}_Z^2=0$.
The map $i$ sends $x\in X$ to $x\oplus 0\oplus f(x)$ and $s$ and $p$ are respectively projections on the first and the third factor, that is, we have
\[
i =
\begin{pmatrix}
id & 0 & f \\
\end{pmatrix},
\]
\[
s =
\begin{pmatrix}
id \\ 0 \\ 0 \\
\end{pmatrix}
\]
and
\[
p=
\begin{pmatrix}
0 \\ 0 \\ id \\
\end{pmatrix}.
\]
There is a diagram of chain complexes
\[
\Xi:Mor(\Ch)\rightarrow Fun(\mathcal{Y},\Ch)
\]
functorial in its variables,
where $Mor(\Ch)$ is the category whose objects are morphisms of chain complexes and morphisms are
commutative squares, and $\mathcal{Y}$ is the small category whose objects and arrows are given by
\begin{equation*}
\mathcal{Y} := \left\{\vcenter{
\xymatrix{
 & & \bullet \\
\bullet \ar[r] \ar@/^/[urr]\ar@/_/[drr] & \bullet \ar[ur] \ar[dr] & \\
 & & \bullet
}
}\right\}.
\end{equation*}
Our goal is to prove that for any cofibrant dg prop $P$, this functor induces a functor
\[
\Xi:Mor(\Ch^P)\rightarrow Fun(\mathcal{Y},\Ch^P),
\]
that is, a functor
\[
\Xi:(\Ch^P)^{\bullet\rightarrow\bullet}\rightarrow (\Ch^P)^{\mathcal{Y}}.
\]
This means the following:
\begin{thm}
Let $P$ be a cofibrant dg prop. The functorial factorization of morphism of chain complexes described above lifts to a functorial factorization of $P$-algebra morphisms into an acyclic injection followed by a surjection.
\end{thm}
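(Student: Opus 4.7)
The plan is to realise the chain-complex factorisation in a universal way inside $\mathtt{TwSum}(P_{\mathcal{J}})$, with $\mathcal{J}=\{\bullet\to\bullet\}$ the walking arrow, and then transport the resulting diagram to an actual factorisation of $P$-algebra morphisms via Proposition~2.7. Concretely, $cat(P_{\mathcal{J}})$ has two formal colours $x,y$ together with the generating $P$-algebra morphism $f_{*}:x\to y$, and I build a twisted sum
\[
\tilde{Z}=\bigl(\mathbb{K}\mathbb{e}_{0}\otimes x\,\oplus\,\mathbb{K}\mathbb{e}_{01}\otimes y\,\oplus\,\mathbb{K}\mathbb{e}_{1}\otimes y,\,\partial_{\tilde{Z}}\bigr)
\]
in $\mathtt{TwSum}(P_{\mathcal{J}})$, with torsion $\partial_{\tilde{Z}}$ given by the same matrix as in the chain-complex construction but with $f_{*}$ in place of $f$. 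The arrows $\tilde{i}$, $\tilde{s}$, $\tilde{p}$ are defined by the same matrix formulas, now viewed as $0$-cycles in the relevant dg hom complexes of $\mathtt{TwSum}(P_{\mathcal{J}})$; the torsion-cochain equation and the identities $\tilde{s}\tilde{i}=\mathrm{id}_{x}$, $\tilde{p}\tilde{i}=f_{*}$, as well as the surjectivity of $\tilde{s}$ and $\tilde{p}$, are formal consequences of the chain-complex case.

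The main step is to promote $\tilde{Z}$ to a $P$-algebra inside $\mathtt{TwSum}(P_{\mathcal{J}})$, i.e.\ to a symmetric monoidal dg functor $cat(P)\to\mathtt{TwSum}(P_{\mathcal{J}})$ sending the generator to $\tilde{Z}$, in such a way that $\tilde{i}$, $\tilde{s}$, $\tilde{p}$ become $P$-algebra morphisms and the whole data assemble into a $\mathcal{Y}$-diagram of $P$-algebras in $\mathtt{TwSum}(P_{\mathcal{J}})$. This is where the cofibrancy of $P$ enters decisively. The identity $\tilde{s}\tilde{i}=\mathrm{id}_{x}$ exhibits $x$ as a deformation retract of $\tilde{Z}$ in $\mathtt{TwSum}(P_{\mathcal{J}})$. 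Forming the endomorphism prop of the $\mathcal{Y}$-shaped candidate diagram via the end construction recalled at the close of Section~1.1, the natural projection onto the endomorphism prop of the subdiagram $\{x\xrightarrow{f_{*}}y\}$ becomes an acyclic fibration of props (the fibre being controlled by the acyclicity of the kernel of $\tilde{s}$). Cofibrancy of $P$ then lifts the given $P_{\mathcal{J}}$-algebra structure, i.e.\ the canonical map $P\to End_{f_{*}}$ living inside $cat(P_{\mathcal{J}})$, to the desired structure on the whole diagram.

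Once this $\mathcal{Y}$-diagram of $P$-algebras in $\mathtt{TwSum}(P_{\mathcal{J}})$ is in hand, Proposition~2.7 provides a functor
\[
(Ch_{\mathbb{K}}^{P})^{\{\bullet\to\bullet\}}\longrightarrow(Ch_{\mathbb{K}}^{P})^{\mathcal{Y}}.
\]
Unravelling the recipe for $\tilde{R}$ in the proof of Lemma~2.6, evaluation of this functor at a $P$-algebra morphism $f:X\to Y$ yields as underlying diagram of chain complexes precisely $\Xi(f)$: the summand $\mathbb{K}\mathbb{e}_{0}\otimes x$ specialises to $X$, the summands $\mathbb{K}\mathbb{e}_{01}\otimes y$ and $\mathbb{K}\mathbb{e}_{1}\otimes y$ to two shifted copies of $Y$, the universal torsion built from $f_{*}$ to the chain-level torsion $\partial_{Z}$, and $\tilde{i},\tilde{s},\tilde{p}$ to the chain-level $i,s,p$. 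Since being an acyclic injection or a surjection is detected on underlying chain complexes, the lifted factorisation has the required form.

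I expect the second step to be the genuine obstacle. Direct sums of $P$-algebras are not $P$-algebras, so one cannot hope to equip $\tilde{Z}$ with a $P$-structure coordinate-wise, and endomorphism props are not functorial, so one really has to go through an obstruction-theoretic lifting along an endomorphism prop of the whole diagram. A careful verification is needed that the relevant projection of endomorphism props --- themselves constructed in the twisted setting of $\mathtt{TwSum}(P_{\mathcal{J}})$, whose hom complexes are twisted sums --- is genuinely an acyclic fibration of props, so that cofibrancy of $P$ can be brought to bear.
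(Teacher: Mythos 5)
Your overall route coincides with the paper's: realise $\Xi$ universally as a $\mathcal{Y}$-shaped diagram in $\mathtt{TwSum}(P_{x\rightarrow y})$ built on the twisted sum $z=(\mathbb{K}\mathbb{e}_0\otimes x\oplus\mathbb{K}\mathbb{e}_{01}\otimes y\oplus\mathbb{K}\mathbb{e}_1\otimes y,\partial_z)$, lift the canonical map $P\rightarrow End_f$ along the projection $End_{\Xi(f:x\rightarrow y)}\rightarrow End_f$ using the cofibrancy of $P$, and then transport through Proposition 2.7 and Lemma 2.6. But the step you yourself flag as ``needing careful verification'' --- that this projection is an acyclic fibration of props --- is exactly where the substance of the proof lies, and your proposed justification (``the fibre being controlled by the acyclicity of the kernel of $\tilde{s}$'') does not deliver it. The projection is obtained by base change from the corner map $(i^*,p_*):End_z\rightarrow Hom_{xz}\times_{Hom_{xy}}Hom_{zy}$, so what must be shown, for all $m,n$, is that $((i^{\otimes m})^*,(p^{\otimes n})_*)$ on the twisted hom complexes of $\mathtt{TwSum}(P_{x\rightarrow y})$ is both surjective and a quasi-isomorphism; an acyclic kernel of $\tilde{s}$, or the deformation-retract observation $\tilde{s}\tilde{i}=id_x$, gives at best a weak-equivalence statement and says nothing about the fibration-plus-corner-map property that the base-change argument requires.

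The paper's proof supplies precisely this missing computation. One decomposes $z^{\otimes n}$ into shuffled twisted summands and identifies the relevant hom complexes as free $P(m,n)$-modules: with $X_b=\mathbb{K}\mathbb{e}_0$, $Y_b=\mathbb{K}\mathbb{e}_1$ and $X_b\rightarrowtail Z_b\twoheadrightarrow Y_b$ the chain-level factorization of $\mathbb{e}_0\mapsto\mathbb{e}_1$, one gets $Hom_{\mathtt{Tw}}(x^{\otimes m},z^{\otimes n})\cong Hom_{Ch_{\mathbb{K}}}(X_b^{\otimes m},Z_b^{\otimes n})\otimes P(m,n)$ and the analogous isomorphisms for $Hom_{\mathtt{Tw}}(z^{\otimes m},z^{\otimes n})$ and $Hom_{\mathtt{Tw}}(z^{\otimes m},y^{\otimes n})$, compatibly with the twisting differentials (this is where the explicit form of $\partial_z^{\otimes n}$ in terms of $f^{\circ_h}$ and $id^{\circ_h}$ enters). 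Under these identifications the corner map becomes $((i_b^{\otimes m})^*,(p_b^{\otimes n})_*)\otimes P(m,n)$, and its acyclic-fibration property is then a purely chain-level fact proved by the pushout-product axiom and Brown's lemma (the methods of Lemma 8.3 of Fresse), after which stability of acyclic fibrations under base change and the componentwise creation of (acyclic) fibrations of props finish the argument. Without this identification your lifting step remains an assertion, so as written the proposal has a genuine gap at its central point, even though the strategy is the right one.
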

\begin{proof}
The general strategy is to prove that the diagram in $\mathtt{TwSum}(P_{x\rightarrow y})$ associated to
$\Xi(f:X\rightarrow Y)$ is actually a diagram in $\mathtt{TwSum}(P_{x\rightarrow y})^P$ and then apply Proposition 2.8.

Let $f:X\rightarrow Y$ be a morphism of chain complexes and $P_{x\rightarrow y}$ be the $2$-colored dg prop of $P$-algebra morphisms.
In this proof, we will use the short notation
\[
\mathtt{Tw}:=\mathtt{TwSum}(P_{x\rightarrow y}).
\]
We can associate to the diagram of chain complexes $\Xi (f:X\rightarrow Y)$ a diagram $\Xi(f:x\rightarrow y)$ in $\mathtt{Tw}$ so that Proposition 2.8 applies.
For this, recall that the colors $x$ and $y$ are embedded into $\mathtt{Tw}$ as the objects $\mathbb{K}\mathbb{e}_0\otimes x,0)$
and $\mathbb{K}\mathbb{e}_1\otimes y,0)$. We will denote by $f$ both the operation of $P_{x\rightarrow y}$ corresponding to $f$ and the morphism
$\mathbb{K}\mathbb{e}_0\otimes x,0)\rightarrow \mathbb{K}\mathbb{e}_1\otimes y,0)$ in $\mathtt{Tw}$.
The object $z$ of $\mathtt{Tw}$ corresponding to $Z$ is defined by
\[
(\mathbb{K}\mathbb{e}_0\otimes x\oplus \mathbb{K}\mathbb{e}_{01}\otimes y\oplus \mathbb{K}\mathbb{e}_1\otimes y,\mathtt{tw}_z)
\]
with
\[
\mathtt{tw}_z =
\begin{pmatrix}
\mathtt{tw}_{0,0} & \mathtt{tw}_{01,0} & \mathtt{tw}_{1,0} \\
\mathtt{tw}_{0,01} & \mathtt{tw}_{01,01} & \mathtt{tw}_{1,01} \\
\mathtt{tw}_{0,1} & \mathtt{tw}_{01,1} & \mathtt{tw}_{1,1} \\
\end{pmatrix}
=
\begin{pmatrix}
0 & 0 & 0 \\
\mathbb{e}_{01}\otimes\mathbb{e}_0^{\vee}\otimes f & 0 & \mathbb{e}_{01}\otimes\mathbb{e}_1^{\vee}\otimes -id \\
0 & 0 & 0 \\
\end{pmatrix}
\]
representing the twisting part $\mathtt{tw}_Z$ of $Z$. The maps $i$ and $p$ of $\Xi(f:x\rightarrow y)$ are then defined similarly to those of $\Xi(f:X\rightarrow Y)$.

The endomorphism dg prop $End_{(\Xi(f:x\rightarrow y), \mathtt{Tw})}$ projects to the endomorphism dg prop $End_{(f, \mathtt{Tw})}$ of the subdiagram $f:x\rightarrow y$, hence we have a fibration of dg props
\[
End_{(\Xi(f:x\rightarrow y), \mathtt{Tw})}\twoheadrightarrow End_{(f, \mathtt{Tw})}.
\]
We will denote these dg props by $End_{\Xi(f:x\rightarrow y)}$ and $End_f$ for short.
We have to prove that this fibration is acyclic. For this, we consider the following commutative diagram of $\mathbb{S}$-biobjects:
\[
\xymatrix{
End_z=Hom_{zz} \ar[dr]^-{(i^*,p_*)}\ar@/^{1pc}/[rrd]^-{p_*}\ar@/^{-1pc}/[ddr]_-{i^*} & & \\
 & Hom_{xz}\times_{Hom_{xy}}Hom_{zy} \ar[d]\ar[r] & Hom_{zy} \ar[d]^{i^*}\\
 & Hom_{xz} \ar[r]_{p_*} & Hom_{xy}
}
\]
where $Hom_{zz}(m,n)= Hom_{\mathtt{Tw}}(z^{\otimes m},z^{\otimes n})$.
Limits of $\mathbb{S}$-biobjects are created pointwise, so for every $(m,n)\in\mathbb{N}^2$ we have a commutative diagram
\[
\xymatrix{
Hom_{\mathtt{Tw}}(z^{\otimes m},z^{\otimes n}) \ar[dr]^-{((i^{\otimes m})^*,(p^{\otimes n})_*)}\ar@/^{1pc}/[rrd]^-{(p^{\otimes n})_*}\ar@/^{-1pc}/[ddr]_-{(i^{\otimes m})^*} & & \\
 & pullback
\ar[d]\ar[r] & Hom_{\mathtt{Tw}}(z^{\otimes m},y^{\otimes n}) \ar[d]^{(i^{\otimes m})^*}\\
 & Hom_{\mathtt{Tw}}(x^{\otimes m},z^{\otimes n}) \ar[r]_{(p^{\otimes n})_*} & Hom_{\mathtt{Tw}}(x^{\otimes m},y^{\otimes n})
}.
\]
We have to check that $((i^{\otimes m})^*,(p^{\otimes n})_*)$ is an acyclic fibration.
Since acyclic fibrations of $\mathbb{S}$-biobjects are determined pointwise, we deduce that
\[
(i^*,p_*):End_z\stackrel{\sim}{\twoheadrightarrow} Hom_{xz}\times_{Hom_{xy}}Hom_{yz}
\]
is an acyclic fibration of $\Sigma$-objects.
Let us consider now the base extensions
\[
End_x\times_{Hom_{xz}}End_z\times_{Hom_{zy}}End_y = End_{\Xi(f:x\rightarrow y)}
\]
and
\[
End_x\times_{Hom_{xz}}(Hom_{xz}\times_{Hom_{xy}}Hom_{zy})\times_{Hom_{zy}}End_y = End_f.
\]
Acyclic fibrations are stable under base extensions, and acyclic fibrations of dg props are determined in the category of $\mathbb{S}$-biobjects under the forgetful functor,
so we finally get the desired acyclic fibration of dg props
\[
End_x\times_{Hom_{xz}}(i^*,p_*)\times_{Hom_{zy}}End_y: End_{\Xi(f:x\rightarrow y)} \stackrel{\sim}{\twoheadrightarrow} End_f.
\]

Now let us note $X_b=\mathbb{K}\mathbb{e}_0$, $Y_b=\mathbb{K}\mathbb{e}_1$ and $f_b:X_b\rightarrow Y_b$ the morphism sending $\mathbb{e}_0$
to $\mathbb{e}_1$. This morphism admits a factorization
\[
\xymatrix{
X_b \ar@{>->}[]!R+<4pt,0pt>;[r]_-{i_b}^-{\sim} & Z_b \ar@{->>}[r]_-{p_b} & Y_b.
}
\]
Our goal is to prove that for every natural integers $m$ and $n$, we have isomorphisms of chain complexes
\[
Hom_{\mathtt{Tw}}(z^{\otimes m},z^{\otimes n}) \cong Hom_{\Ch}(Z_b^{\otimes m},Z_b^{\otimes n})\otimes P(m,n),
\]
\[
Hom_{\mathtt{Tw}}(z^{\otimes m},y^{\otimes n}) \cong Hom_{\Ch}(Z_b^{\otimes m},Y_b^{\otimes n})\otimes P(m,n)
\]
and
\[
Hom_{\mathtt{Tw}}(x^{\otimes m},z^{\otimes n}) \cong Hom_{\Ch}(X_b^{\otimes m},Z_b^{\otimes n})\otimes P(m,n).
\]
The method is exactly the same for the three cases, so we just write the argument for the third isomorphism.
We need to determine the tensor powers of $z$. For every natural integer $n$, the object $z^{\otimes n}$ is given by the direct sum of shuffles
\[
\bigoplus_{\substack{1\leq j\leq i\leq n\\ \sigma\in Sh(i,m-i)\\ \tau\in Sh(j,m-j)}}\sigma_*((\mathbb{K}\mathbb{e}_0\otimes x)^{\otimes n-i},\tau_*((\mathbb{K}\mathbb{e}_{01}\otimes y)^{\otimes j},
(\mathbb{K}\mathbb{e}_1\otimes y)^{\otimes i-j})),
\]
where the action $\sigma_*(A^{\otimes k},B^{\otimes l})$ of a $(k,l)$-shuffle $\sigma$ on a pair of tensor powers $(A^{\otimes k},B^{\otimes l})$ permutes the variables of the tensor product
$A^{\otimes k}\otimes B^{\otimes l}$.
The twisting of $z^{\otimes n}$ is determined by
\[
\mathtt{tw}_{0,01}^{\otimes n}=\mathbb{e}_{01}^{\otimes n}\otimes (\mathbb{e}_0^{\vee})^{\otimes n}\otimes f^{\circ_h n}
\]
and
\[
\mathtt{tw}_{1,01}^{\otimes n}=\mathbb{e}_{01}^{\otimes n}\otimes (\mathbb{e}_1^{\vee})^{\otimes n}\otimes (-id)^{\circ_h n}
\]
where $\circ_h$ is the horizontal composition product of the dg prop $P_{x\rightarrow y}$.
We get
\begin{multline*}
Hom_{\mathtt{Tw}}(x^{\otimes m},z^{\otimes n}) \\
 = \\
\bigoplus_{\substack{1\leq j\leq i\leq n\\ \sigma\in Sh(i,m-i)\\ \tau\in Sh(j,m-j)}}
Hom_{\mathtt{Tw}}(x^{\otimes m},\sigma_*((\mathbb{K}\mathbb{e}_0\otimes x)^{\otimes n-i},\tau_*((\mathbb{K}\mathbb{e}_{01}\otimes y)^{\otimes j},(\mathbb{K}\mathbb{e}_1\otimes y)^{\otimes i-j})))\\
 \cong \\
\bigoplus_{\substack{1\leq j\leq i\leq n\\ \sigma\in Sh(i,m-i)\\ \tau\in Sh(j,m-j)}}\mathbb{K}\mathbb{e}_0^{\otimes n-i}\otimes\mathbb{K}\mathbb{e}_{01}^{\otimes j}\otimes\mathbb{K}\mathbb{e}_1^{\otimes i-j}\otimes Hom_{\mathtt{Tw}}(x^{\otimes m},
\sigma_*(x^{\otimes n-i},\tau_*(y^{\otimes j},y^{\otimes i-j})))\\
\end{multline*}
Moreover, we have
\[
Hom_{\mathtt{Tw}}(x^{\otimes m},\sigma_*(x^{\otimes n-i},\tau_*(y^{\otimes j},y^{\otimes i-j})))= P_{x\rightarrow y}(x,\cdots,x;\sigma_*(x,\cdots,x,\tau_*(y,\cdots,y)))
\]
where $P_{x\rightarrow y}(x,\cdots,x;\sigma_*(x,\cdots,x,\tau_*(y,\cdots,y)))$ has $m$ copies of the color $x$ in input, and in output $n-i$ colors $x$ and $i$ colors $y$
permuted by the shuffles $\sigma$ and $\tau$.
We want to build an isomorphism
\begin{multline*}
\bigoplus_{\substack{1\leq j\leq i\leq n\\ \sigma\in Sh(i,m-i)\\ \tau\in Sh(j,m-j)}} \mathbb{K}\mathbb{e}_0^{\otimes n-i}\otimes\mathbb{K}\mathbb{e}_{01}^{\otimes j}\otimes\mathbb{K}\mathbb{e}_1^{\otimes i-j}\otimes
P_{x\rightarrow y}(x,\cdots,x;\sigma_*(x,\cdots,x,\tau_*(y,\cdots,y))) \\
 \cong \\
\bigoplus_{\substack{1\leq j\leq i\leq n\\ \sigma\in Sh(i,m-i)\\ \tau\in Sh(j,m-j)}} Hom_{\Ch}(X_b^{\otimes m},\sigma_*(X_b^{\otimes n-i},\tau_*(Y_b[-1]^{\otimes j},Y_b^{\otimes i-j})))\otimes P(m,n)\\
\end{multline*}
where $[-1]$ is the degree shift applied to the chain complex $Y_b$.
For this, we define in each component $(i,j,\sigma,\tau)$ of the direct sum an isomorphism
\begin{eqnarray*}
\mathbb{K}\mathbb{e}_0^{\otimes n-i}\otimes\mathbb{K}\mathbb{e}_{01}^{\otimes j}\otimes\mathbb{K}\mathbb{e}_1^{\otimes i-j}\otimes
P_{x\rightarrow y}(x,\cdots,x;\sigma_*(x,\cdots,x,\tau_*(y,\cdots,y))) & & \\
\rightarrow & & \\
Hom_{\Ch}(X_b^{\otimes m},\sigma_*(X_b^{\otimes n-i},\tau_*(Y_b[-1]^{\otimes j},Y_b^{\otimes i-j})))\otimes P(m,n) & & \\
\end{eqnarray*}
which sends any
\[
\xi\in P_{x\rightarrow y}(x,\cdots,x;\sigma_*(x,\cdots,x,\tau_*(y,\cdots,y)))
\]
to
\[
\sigma_*\tau_*\otimes \sigma_*(f^{\circ_h n-i},id^{\circ_h i})\circ_v\xi,
\]
where $\sigma_*\tau_*$ is the unique homomorphism sending $\mathbb{e}_0^{\otimes m}$ to $\sigma_*(\mathbb{e}_0^{\otimes n-i},\tau_*(\mathbb{e}_{01}^{\otimes j},\mathbb{e}_1^{\otimes i-j}))$
and $\sigma_*(f^{\circ_h n-i},id^{\circ_h i})$ is the permutation of the variables in the iterated horizontal product $f\circ_h \cdots\circ_h f\circ_h id\circ_h \cdots\circ_h id$
by $\sigma$.

Finally, since $((i^{\otimes m})^*,(p^{\otimes n})_*)$ is the tensor product of $((i_b^{\otimes m})^*,(p_b^{\otimes n})_*)$ by $P(m,n)$,
it remains to apply the methods of \cite[Lemma 8.3]{Fre1} in the category of chain complexes, for $X_b$ and $Y_b$,
to prove that $((i_b^{\otimes m})^*,(p_b^{\otimes n})_*)$ is an acyclic fibration.
We write the arguments here for the sake of clarity. We have a commutative diagram
\[
\xymatrix{
Hom_{\Ch}(Z_b^{\otimes m},Z_b^{\otimes n}) \ar[dr]^-{((i_b^{\otimes m})^*,(p_b^{\otimes n})_*)}\ar@/^{1pc}/[rrd]^-{(p_b^{\otimes n})_*}\ar@/^{-1pc}/[ddr]_-{(i_b^{\otimes m})^*} & & \\
 & pullback
\ar[d]\ar[r] & Hom_{\Ch}(Z_b^{\otimes m},Y_b^{\otimes n}) \ar[d]^{(i_b^{\otimes m})^*}\\
 & Hom_{\Ch}(X_b^{\otimes m},Z_b^{\otimes n}) \ar[r]_{(p_b^{\otimes n})_*} & Hom_{\Ch}(X_b^{\otimes m},Y_b^{\otimes n})
}.
\]
Recall that chain complexes over a field are all cofibrant and fibrant in the model structure of $\Ch$.
The map $i_b$ is a cofibration and $X_b$ is cofibrant, so by the pushout-product axiom, for every integer $n$ the map $i_b^{\otimes n}:X^{\otimes n}\rightarrow Z^{\otimes n}$
is a cofibration.
The category $\Ch$ satisfies the limit monoid axioms \cite[Section 6]{Fre1} and $Y_b$ is fibrant, so for every integer $n$ the map $p_b^{\otimes n}:Z_b^{\otimes n}\rightarrow Y_b^{\otimes n}$ is a fibration \cite[Proposition 6.7]{Fre1}.
Moreover, by the pushout-product axiom, the tensor product preserves acyclic cofibrations between cofibrant objects, so by Brown's Lemma it preserves
weak equivalences between cofibrant objects. Given that $Z_b$ and $Y_b$ are cofibrant, it implies that $p_b^{\otimes n}$ is an acyclic fibration.
According to the dual pushout-product axiom, the fact that $i_b^{\otimes m}$ is a cofibration and $p_b^{\otimes n}$ is an acyclic fibration implies that
$((i_b^{\otimes m})^*,(p_b^{\otimes n})_*)$ is an acyclic fibration.
\end{proof}

\section{The subcategory of acyclic fibrations}

The goal of this section is to prove that the classifying space of weak equivalences of $P$-algebras
is weakly equivalent to the classifying space of acyclic fibrations of $P$-algebras:
\begin{thm}
Let $P$ be a cofibrant dg prop.
The inclusion of categories $i: fw\Ch^P\hookrightarrow w\Ch^P$
gives rise to a weak equivalence of simplicial sets $\mathcal{N} fw\Ch^{P}\stackrel{\sim}{\rightarrow}\mathcal{N} w\Ch^{P}$.
\end{thm}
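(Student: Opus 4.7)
The plan is to apply Quillen's Theorem A to the inclusion $i\colon fwCh_{\mathbb{K}}^P\hookrightarrow wCh_{\mathbb{K}}^P$. It is enough to show that for every object $Y\in wCh_{\mathbb{K}}^P$, the undercategory $Y\downarrow i$ has contractible nerve. Recall that an object of $Y\downarrow i$ is a pair $(X,w\colon Y\to X)$ with $w$ a weak equivalence of $P$-algebras, and that a morphism $(X_1,w_1)\to(X_2,w_2)$ is an acyclic surjection of $P$-algebras $f\colon X_1\to X_2$ satisfying $f\circ w_1=w_2$.

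For each $(X,w)\in Y\downarrow i$, I apply the functorial factorization $\Xi$ provided by Theorem 2.9 to $w$, obtaining a $\mathcal{Y}$-shaped diagram of $P$-algebras
\[
Y \xrightarrow{i_w} Z_w \xrightarrow{p_w} X,
\]
together with a retraction $s_w\colon Z_w\to Y$ of $i_w$. Since $w$ and $i_w$ are both weak equivalences, two-out-of-three forces $p_w$ and $s_w$ to be quasi-isomorphisms; they are projections onto summands in the direct sum presentation of $Z_w$, so each is a componentwise surjection, hence an acyclic surjection of $P$-algebras. The assignment $(X,w)\mapsto(Z_w,i_w)$ extends, by naturality of $\Xi$, to a functor $\Phi\colon Y\downarrow i\to Y\downarrow i$. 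On a morphism $f\colon(X,w)\to(X',w')$, the induced map $\Phi(f)\colon Z_w\to Z_{w'}$ acts as $f$ on the two $X$-summands and as the identity on the $Y$-summand, which makes it a componentwise surjection; it is a weak equivalence by two-out-of-three applied to $s_{w'}\circ\Phi(f)=s_w$.

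I then construct two natural transformations between endofunctors of $Y\downarrow i$. The first, $\mu\colon\Phi\Rightarrow\mathrm{id}$, has component $\mu_{(X,w)}=p_w$, which is a morphism of $Y\downarrow i$ because $p_w\circ i_w=w$. The second, $\nu\colon\Phi\Rightarrow\mathrm{const}_{(Y,\mathrm{id}_Y)}$, has component $\nu_{(X,w)}=s_w$, which is a morphism of $Y\downarrow i$ because $s_w\circ i_w=\mathrm{id}_Y$. Naturality in each case reduces to $p_{w'}\circ\Phi(f)=f\circ p_w$ and $s_{w'}\circ\Phi(f)=s_w$, both immediate from the naturality of $\Xi$. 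Since a natural transformation of functors induces a simplicial homotopy between the corresponding maps of nerves, we get
\[
\mathrm{id}_{\mathcal{N}(Y\downarrow i)}\;\simeq\;\mathcal{N}(\Phi)\;\simeq\;\mathrm{const},
\]
so $\mathcal{N}(Y\downarrow i)$ is contractible and Quillen's Theorem A yields the claimed weak equivalence.

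The main obstacle is packaged inside the inputs. It is Theorem 2.9, whose proof relies on the universal twisted sums of Section 2 to lift the chain-level factorization $\Xi$ to $P$-algebras, together with the Section 3 analysis of the injective and projective model structures on diagrams of chain complexes and their compatibility with internal and external tensor products. These are what guarantee that the maps produced by $\Xi$ (and the induced maps $\Phi(f)$) are genuine acyclic surjections of $P$-algebras in the sense required by the subcategory $fwCh_{\mathbb{K}}^P$; the cofibrancy of $P$ enters precisely here, allowing one to detect the relevant surjection property on the underlying chain complex diagrams. Once these prop-level inputs are available, the Quillen A argument above is essentially formal.
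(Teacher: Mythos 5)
Your argument is correct and rests on the same two pillars as the paper's proof: Quillen's Theorem A for the inclusion $i\colon fwCh_{\mathbb{K}}^P\hookrightarrow wCh_{\mathbb{K}}^P$, and the $P$-algebra-level functorial factorization $\Xi$ established at the end of Section 2 (the paper's Theorem 2.8, which you cite as 2.9). Where you genuinely diverge is in how contractibility of the comma categories is extracted. You package $\Xi$ into an endofunctor $\Phi$ of $(Y\downarrow i)$ together with natural transformations $\Phi\Rightarrow \mathrm{id}$ and $\Phi\Rightarrow \mathrm{const}_{(Y,\mathrm{id}_Y)}$, so the nerve is contracted by two simplicial homotopies and you are done at once. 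The paper instead proves the a priori weaker Proposition 3.3, that every map $\mathcal{N}I\rightarrow\mathcal{N}(X\downarrow i)$ from the nerve of a small category is null-homotopic (by applying $\Xi$ to the single morphism of $I$-diagrams $\overline{X}\rightarrow F\times\overline{X}$ and producing the zigzag $\overline{X}\leftarrow G\rightarrow F$), and then needs an extra layer, Propositions 3.4 and 3.5 together with the adjunction $|-|\dashv Sing_{\bullet}$ and the subdivisions $sd\,\partial\Delta^{n+1}$ of spheres, to convert this into the vanishing of all homotopy groups. Your route shortcuts that whole layer; what it consumes is exactly what the paper's route also consumes, namely the strict functoriality of the lifted $\Xi$ on $Mor(Ch_{\mathbb{K}}^P)$ and the chain-level identification of the components ($\Phi(f)$ is the identity on the $Y$-summand and $f$ on the two $X$-summands, whence $\Phi(f)$, $p_w$, $s_w$ are acyclic surjections), both of which the twisted-sum construction delivers, so nothing extra is being assumed. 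Two small corrections of attribution: the paper factors $\overline{X}\rightarrow F\times\overline{X}$ whereas you factor $w$ itself and use the retraction $s_w$, which is in any case part of the $\mathcal{Y}$-diagram, so this difference is immaterial; and the cofibrancy of $P$ (and the acyclic-fibration statement about endomorphism props) is what permits lifting the $P$-algebra structure to the universal $\mathcal{Y}$-diagram, not what detects surjectivity, while the injective/projective model-structure material you mention only feeds the mapping-space step of the paper that your argument bypasses.
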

\begin{rem}
Actually, the methods of \cite{Yal2} can be transposed in our setting to prove the following much stronger statement.
We refer the reader to the seminal papers \cite{DK1}, \cite{DK2} and \cite{DK3} for the notions of simplicial localization,
hammock localization and Dwyer-Kan equivalences of simplicial categories.
The inclusion of categories $i:fw\Ch^P\hookrightarrow w\Ch^P$ induces a Dwyer-Kan equivalence of hammock
localizations
\[
L^H(\Ch^P,fw\Ch^P)\stackrel{\sim}{\rightarrow} L^H(\Ch^P,w\Ch^P).
\]
We refer the reader to \cite{Yal2} for more details about this proof, which relies on the properties of several models of
$(\infty,1)$-categories (simplicial categories \cite{Ber}, relative categories \cite{BK} and complete Segal spaces \cite{Rez2}).
\end{rem}

To prove this theorem, we use Quillen's Theorem A \cite{Qui}: we have to check that for every chain complex $X$,
the nerve of the comma category $(X\downarrow i)$ is contractible. For this aim, we prove
the following more general result:
\begin{prop}
Let $\mathcal{I}$ be a small category. Every simplicial map $\mathcal{N}\mathcal{I}\rightarrow\mathcal{N}(X\downarrow i)$ is null up to homotopy.
\end{prop}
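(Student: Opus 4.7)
The plan is to reduce to the case of a functor and then construct a zigzag of natural transformations to a constant functor. Since the nerve $\mathcal{N}:\mathrm{Cat}\to\mathrm{sSet}$ is fully faithful, every simplicial map $\mathcal{N}I\to\mathcal{N}(X\downarrow i)$ comes from a functor $F:I\to(X\downarrow i)$, and any natural transformation between two such functors induces a simplicial homotopy between the corresponding nerves. Unpacking, $F$ is equivalent to the datum of an $I$-diagram $Y:I\to Ch_{\mathbb{K}}^P$ whose structure maps $h_{jk}:Y_j\to Y_k$ are acyclic fibrations, together with a cone of weak equivalences $\eta:\mathrm{const}_X\Rightarrow Y$. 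It therefore suffices to exhibit a zigzag of natural transformations from $F$ to $\mathrm{const}_{(X,\mathrm{id}_X)}$.

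The key is to apply the functorial factorization of Theorem 2.9 to the whole diagram at once. Viewing $\eta$ as a functor $\widetilde{F}:I\to\mathrm{Mor}(Ch_{\mathbb{K}}^P)$, $j\mapsto(\eta_j:X\to Y_j)$, the postcomposition $\Xi\circ\widetilde{F}$ yields a functor $I\to\mathrm{Fun}(\mathcal{Y},Ch_{\mathbb{K}}^P)$ producing, for each $j$, the factorization $X\stackrel{i_j}{\hookrightarrow} Z_j\stackrel{p_j}{\twoheadrightarrow}Y_j$ together with the retraction $s_j:Z_j\twoheadrightarrow X$ satisfying $s_j\circ i_j=\mathrm{id}_X$, all natural in $j$. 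From the explicit formula
\[
Z_j=\mathbb{K}\mathbb{e}_0\otimes X\oplus\mathbb{K}\mathbb{e}_{01}\otimes Y_j\oplus\mathbb{K}\mathbb{e}_1\otimes Y_j,
\]
the induced maps $Z_j\to Z_k$ act on the underlying graded modules as $\mathrm{id}_X\oplus h_{jk}\oplus h_{jk}$, with kernel the mapping cylinder on $\mathrm{id}_{\ker h_{jk}}$ and hence acyclic; they are therefore surjective quasi-isomorphisms. Similarly each $s_j$ and $p_j$ is an acyclic fibration. Hence $j\mapsto(Z_j,i_j)$ defines a functor $F_Z:I\to(X\downarrow i)$, and the families $\{p_j\}$ and $\{s_j\}$ assemble into natural transformations $F_Z\Rightarrow F$ and $F_Z\Rightarrow\mathrm{const}_{(X,\mathrm{id}_X)}$ in $(X\downarrow i)$.

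Passing to nerves yields $\mathcal{N}F\simeq\mathcal{N}F_Z\simeq\mathcal{N}\,\mathrm{const}_{(X,\mathrm{id}_X)}$, and the right-hand side is a constant map, so $\mathcal{N}F$ is null-homotopic. The main difficulty I expect is confirming that the functorial factorization really lifts from individual morphisms to the diagrammatic setting in a way that preserves the acyclic fibration property of the new structure maps $Z_j\to Z_k$ as morphisms of $P$-algebras; this reduces to repeating the argument of Theorem 2.9 in a parametrized form over $I$, using the explicit matrix presentation of $Z_j$ together with the fact, recalled in Section 2.2, that acyclic fibrations in $\mathcal{P}$ and in $Ch_{\mathbb{K}}^P$ are detected componentwise on underlying $\Sigma$-biobjects.
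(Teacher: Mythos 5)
Your argument is correct and is essentially the paper's own proof: reduce the simplicial map to a functor $F:I\rightarrow (X\downarrow i)$, apply the functorial factorization of Theorem 2.8 (the lift of $\Xi$ to $P$-algebras) to the cone from the constant diagram $\overline{X}$, use the explicit twisted-sum formula to see that the intermediate functor has acyclic fibrations as structure maps and weak equivalences from $X$, and conclude by a zigzag of natural transformations to the constant functor after passing to nerves. The only cosmetic difference is that the paper factors the canonical map $\overline{X}\rightarrow F\times\overline{X}$ and composes with the two projections to get the two legs $p_1,p_2$ of the zigzag, whereas you factor $\overline{X}\rightarrow F$ directly and use the retraction $s$ that is already part of the $\mathcal{Y}$-shaped output of $\Xi$; both routes produce the same kind of pointwise acyclic fibrations and the same conclusion.
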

As a consequence we get:
\begin{prop}
The simplicial set $\mathcal{N}(X\downarrow i)$ is contractible.
\end{prop}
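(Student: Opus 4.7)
The plan is to apply Proposition 3.3 in the special case where the indexing small category $I$ is the comma category $(X\downarrow i)$ itself. The identity endofunctor of $(X\downarrow i)$ induces, under the nerve functor, the identity simplicial map
\[
\mathrm{id}_{\mathcal{N}(X\downarrow i)}:\mathcal{N}(X\downarrow i)\to \mathcal{N}(X\downarrow i),
\]
which is manifestly a simplicial map of the form $\mathcal{N}I\to\mathcal{N}(X\downarrow i)$. Proposition 3.3 then immediately yields that this identity map is null up to homotopy, i.e.\ simplicially homotopic to a constant map at some vertex.

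To derive contractibility from this, it suffices to recall the standard fact that a non-empty simplicial set $Y$ whose identity map is null-homotopic is contractible: the null-homotopy displays $\mathrm{id}_Y$ as homotopic to a constant map factoring through a point, and so $Y$ is weakly equivalent to a point. Non-emptiness of $\mathcal{N}(X\downarrow i)$ is immediate from the fact that $(X\downarrow i)$ contains the object $(X,\mathrm{id}_X)$: the identity $\mathrm{id}_X$ is tautologically a weak equivalence of $P$-algebras, so it defines a valid arrow in $wCh_{\mathbb{K}}^P$ based at $X$ and with target in $fwCh_{\mathbb{K}}^P$ (viewing $X$ as an object of the subcategory of acyclic fibrations).

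The whole deduction is essentially formal once Proposition 3.3 is granted; no further input on the homotopy theory of $P$-algebras is needed at this step. Consequently, there is no genuine obstacle in the present proposition itself, since all the real work has been deferred to the previous statement, where the null-homotopy must be constructed for \emph{every} indexing category $I$ simultaneously. One small point of hygiene worth mentioning is that the homotopy produced by Proposition 3.3 should be understood in the sense of simplicial homotopy (or equivalently, under geometric realization), because nerves of categories are generally not Kan complexes; this causes no trouble, since contractibility of a simplicial set is a property of its geometric realization, and a homotopy from $\mathrm{id}_Y$ to a constant map in either sense exhibits $|Y|$ as a retract of a cone on itself, hence as contractible.
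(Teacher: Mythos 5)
Your reduction ``identity map null up to homotopy $+$ nonempty $\Rightarrow$ contractible'' is sound, and you are right to note that the homotopy should be read after geometric realization since $\mathcal{N}(X\downarrow i)$ is not Kan. The gap is in the single step you treat as immediate: you apply Proposition 3.3 with $I=(X\downarrow i)$, but Proposition 3.3 is stated only for a \emph{small} category $I$, and $(X\downarrow i)$ is not small --- its objects are all weak equivalences $X\stackrel{\sim}{\rightarrow}Y$ of $P$-algebras, and already up to isomorphism the targets $Y$ form a proper class (e.g.\ $X\oplus A$ for acyclic complexes $A$ of arbitrarily large cardinality). So the proposition, as stated, simply does not apply to the identity functor of $(X\downarrow i)$, and this size restriction is precisely why the paper does not argue the way you do: instead it tests $\mathcal{N}(X\downarrow i)$ against the small categories $sd\,\partial\Delta^{n+1}$ (subdivision models of the spheres $S^n$), uses mapping spaces and the Quillen equivalence $|-|:sSet\rightleftarrows Top:Sing_\bullet$ to identify $\pi_0 Map(\mathcal{N}sd\,\partial\Delta^{n+1},\mathcal{N}(X\downarrow i))$ with $[S^n,|\mathcal{N}(X\downarrow i)|]$, concludes from Proposition 3.3 that all these homotopy classes are trivial, and hence that all homotopy groups of $|\mathcal{N}(X\downarrow i)|$ vanish.

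Your shortcut could be repaired, but only by adding an argument you do not give: either check explicitly that the proof of Proposition 3.3 (a functor $F:I\rightarrow(X\downarrow i)$, the constant diagram $\overline{X}$, and the pointwise functorial factorization of Theorem 2.8 producing the zigzag $\overline{X}\leftarrow G\rightarrow F$) never uses smallness of $I$ and hence applies verbatim to large index categories, or work with an enlargement of universes making $(X\downarrow i)$ small while keeping the factorization available at that level. Without one of these, the claim that ``no further input is needed at this step'' is not justified, since all nerves of large categories in this section are handled by the paper exactly by reducing to small test categories. If you do supply such a justification, your argument is genuinely shorter than the paper's: it avoids the sphere-by-sphere computation, the Quillen adjunction formalism of Proposition 3.5, and the implicit appeal to Whitehead's theorem, at the cost of a strengthened (large-index) version of Proposition 3.3.
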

To prove Proposition 3.4, we apply Proposition 3.3, for every $n\in\mathbb{N}$, to the subdivision
category of a simplicial model of the $n$-sphere $S^n$.
We take $\partial\Delta^{n+1}$ as simplicial model of $S^n$ and note $sd\partial\Delta^{n+1}$ its subdivision category.
We then use general arguments of homotopical algebra:
\begin{prop}
Let $F:\mathcal{C}\rightleftarrows\mathcal{D}:G$ be a Quillen adjunction. It induces natural isomorphisms
\[
Map_{\mathcal{D}}(F(X),Y) \cong Map_{\mathcal{C}}(X,G(Y))
\]
where $X$ is a cofibrant object of $\mathcal{C}$ and $Y$ a fibrant object of $\mathcal{D}$.
\end{prop}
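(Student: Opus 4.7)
The plan is to model both mapping spaces in a way that permits the adjunction bijection $Mor_{\mathcal{D}}(F(-),-)\cong Mor_{\mathcal{C}}(-,G(-))$ to be applied levelwise in the cosimplicial direction. Concretely, I would fix a functorial cosimplicial resolution $X^{\bullet}\stackrel{\sim}{\rightarrow} cc_{*}X$ of $X$ in $\mathcal{C}$, whose existence is granted by Proposition 1.7, and take $Map_{\mathcal{C}}(X,G(Y)):=Mor_{\mathcal{C}}(X^{\bullet},G(Y))$ as the model of the target mapping space. This is a homotopically meaningful model because $G$ is right Quillen and hence $G(Y)$ is fibrant in $\mathcal{C}$, so that by Proposition 1.10 the simplicial set $Mor_{\mathcal{C}}(X^{\bullet},G(Y))$ has the correct homotopy type.

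The key step is to show that $F(X^{\bullet})$ is a cosimplicial resolution of $F(X)$ in $\mathcal{D}$, from which the identification $Map_{\mathcal{D}}(F(X),Y)=Mor_{\mathcal{D}}(F(X^{\bullet}),Y)$ will follow. Here I would invoke the standard prolongation of the Quillen adjunction $F\dashv G$ to a Quillen adjunction between the Reedy model categories of cosimplicial objects $\mathcal{C}^{\Delta}$ and $\mathcal{D}^{\Delta}$ (Theorem 15.4.1 of \cite{Hir}). Since $X$ is cofibrant, the constant cosimplicial object $cc_{*}X$ is Reedy cofibrant, and the augmentation $X^{\bullet}\to cc_{*}X$ is a Reedy weak equivalence between Reedy cofibrant objects. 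The prolonged left Quillen functor $F$ then preserves the Reedy cofibrant property and, by Brown's lemma applied in $\mathcal{D}^{\Delta}$, preserves this weak equivalence. We obtain a Reedy weak equivalence $F(X^{\bullet})\to F(cc_{*}X)=cc_{*}F(X)$ with Reedy cofibrant source, which is the desired cosimplicial resolution of $F(X)$.

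With this in hand, I would conclude by assembling the levelwise adjunction bijections
\[
Mor_{\mathcal{D}}(F(X^{n}),Y)\cong Mor_{\mathcal{C}}(X^{n},G(Y)),
\]
which are natural in the cosimplicial degree $n\in\Delta$ and thus provide an isomorphism of simplicial sets $Mor_{\mathcal{D}}(F(X^{\bullet}),Y)\cong Mor_{\mathcal{C}}(X^{\bullet},G(Y))$. Naturality in $X$ and $Y$ follows from the naturality of the Quillen adjunction together with the functoriality of the cosimplicial resolution furnished by Proposition 1.7.

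The only genuinely non-formal ingredient is the prolongation of the Quillen adjunction to the Reedy model structures on cosimplicial objects; the main obstacle reduces to verifying that left adjoints intertwine the latching object construction, so that Reedy cofibrancy and the relevant weak equivalences are preserved by $F$. Once this is granted, the remainder of the argument is a direct unraveling of the definitions of the mapping spaces via cosimplicial resolutions.
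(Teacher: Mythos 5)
Your proposal follows essentially the same route as the paper: prolong the Quillen adjunction to the Reedy model structures on cosimplicial objects (the key point being that the left adjoint commutes with the latching-object colimits), conclude that $F$ carries a cosimplicial resolution/frame of the cofibrant object $X$ to one of $F(X)$, and then assemble the levelwise adjunction bijections into the isomorphism of simplicial sets, with the same implicit caveat (made explicit in the paper's subsequent remark) that the frame on $F(X)$ must be chosen as the image under $F$ of the frame on $X$. One assertion should be repaired: the constant cosimplicial object $cc_{*}X$ is \emph{not} Reedy cofibrant in general even for cofibrant $X$ (already at level $1$ its latching map is the fold map $X\sqcup X\rightarrow X$), so you cannot invoke Brown's lemma in $\mathcal{D}^{\Delta}$ as stated; instead, since Reedy weak equivalences are levelwise and both $X^{\bullet}$ and $cc_{*}X$ are levelwise cofibrant, apply Brown's lemma in $\mathcal{C}$ degreewise to the maps $X^{n}\rightarrow X$ to see that $F(X^{\bullet})\rightarrow cc_{*}F(X)$ is a Reedy weak equivalence, which is exactly how the paper argues.
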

\begin{proof}
We will use the definition of mapping spaces via cosimplicial frames.
The proof works as well with simplicial frames.
The adjunction $(F,G)$ induces an adjunction at the level of diagram categories
\[
F:\mathcal{C}^{\Delta}\rightleftarrows\mathcal{D}^{\Delta}:G.
\]
Now let $\phi:A^{\bullet}\rightarrowtail B^{\bullet}$ be a Reedy cofibration between Reedy cofibrant objects
of $\mathcal{C}^{\Delta}$. This is equivalent, by definition, to say that for every integer $r$ the maps
\[
(\lambda,\phi)_r:L^rB\coprod_{L^rA}A^r\rightarrowtail B^r
\]
induced by $\phi$ and the latching object construction $L^{\bullet}A$ are cofibrations in $\mathcal{C}$.
Let us consider the pushout
\[
\xymatrix{
L^rA\ar[r]\ar[d]_{L^r\phi} & A^r\ar[d] \\
L^rB\ar[r] & L^rB\coprod_{L^rA}A^r
}.
\]
The fact that $\phi$ is a Reedy cofibration implies that for every $r$, the map $L^r\phi$ is a cofibration.
Since cofibrations are stable under pushouts, the map $A^r\rightarrow L^rB\coprod_{L^rA}A^r$ is also
a cofibration. By assumption, the cosimplicial object $A^{\bullet}$ is Reedy cofibrant, so it is in particular
pointwise cofibrant. We deduce that $L^rB\coprod_{L^rA}A^r$ is cofibrant. Similarly, each $B^r$ is cofibrant
since $B^{\bullet}$ is Reedy cofibrant. The map $(\lambda,\phi)_r$ is a cofibration between cofibrant objects
and $F$ is a left Quillen functor, so $F((\lambda,\phi)_r)$ is a cofibration of $\mathcal{D}$ between
cofibrant objects. Recall that the $r^{th}$ latching object construction is defined by a colimit.
As a left adjoint, the functor $F$ commutes with colimits so we get a cofibration
\[
L^rF(B^{\bullet})\coprod_{L^rF(A^{\bullet})}F(A^r)\rightarrowtail F(B^r).
\]
This means that $F(\phi)$ is a Reedy cofibration in $\mathcal{D}^{\Delta}$.
Now, given that Reedy weak equivalences are the pointwise equivalences, if $\phi$ is a Reedy weak equivalence
between Reedy cofibrant objects then it is in particular a pointwise weak equivalence between pointwise cofibrant
objects, hence $F(\phi)$ is a Reedy weak equivalence in $\mathcal{D}^{\Delta}$.
We conclude that $F$ induces a left Quillen functor between cosimplicial objects for the Reedy model structures.
In particular, it sends any cosimplicial frame of a cofibrant object $X$ of $\mathcal{C}$ to a cosimplicial
frame of $F(X)$.
\end{proof}
\begin{rem}
The isomorphism above holds if the cosimplicial frame for the left-hand mapping space is chosen to be
the image under $F$ of the cosimplicial frame of the right-hand mapping space.
But recall that cosimplicial frames on a given object are all weakly equivalent, so that for any choice
of cosimplicial frame we get at least weakly equivalent mapping spaces.
\end{rem}
Now, recall that the geometric realization functor and the singular complex functor induce a Quillen equivalence
\[
|-|:sSet\rightleftarrows Top:Sing_{\bullet}(-)
\]
between topological spaces and simplicial sets.
We have
\begin{eqnarray*}
Map_{sSet}(\mathcal{N}sd\partial\Delta^{n+1},\mathcal{N}(X\downarrow i)) & \simeq & Map_{sSet}(\mathcal{N}sd\partial\Delta^{n+1},Sing_{\bullet}(|\mathcal{N}(X\downarrow i)|)) \\
 & \simeq & Map_{Top}(|\mathcal{N}sd\partial\Delta^{n+1}|,|\mathcal{N}(X\downarrow i)|) \\
 & \simeq & Map_{Top}(S^n,|\mathcal{N}(X\downarrow i)|) \\
\end{eqnarray*}
hence
\[
|Map_{sSet}(\mathcal{N}sd\partial\Delta^{n+1},\mathcal{N}(X\downarrow i))|\simeq |Map_{Top}(S^n,|\mathcal{N}(X\downarrow i)|)|,
\]
in particular
\[
\pi_0|Map_{sSet}(\mathcal{N}sd\partial\Delta^{n+1},\mathcal{N}(X\downarrow i))| \cong [S^n,|\mathcal{N}(X\downarrow i)|]_{Ho(Top)}.
\]
Proposition 3.3 means that for every integer $n$, the space $|Map_{sSet}(\mathcal{N}sd\partial\Delta^{n+1},\mathcal{N}(X\downarrow i))|$
has only one connected component (the component of the zero map),
that is, the homotopy groups of $|\mathcal{N}(X\downarrow i)|$ are trivial.

\begin{proof}[Proof of Proposition 3.3]
The category $(X\downarrow i)$ has weak equivalences $X\stackrel{\sim}{\rightarrow} Y$ as objects and acyclic fibrations as morphisms.
It contains the initial object $X\stackrel{=}{\rightarrow}X$ of $(X\downarrow \Ch)$.

Every simplicial map $\mathcal{N}\mathcal{I}\rightarrow \mathcal{N}(X\downarrow i)$ comes from a functor
$\mathcal{I}\rightarrow (X\downarrow i)$, i.e a $\mathcal{I}$-diagram in $(X\downarrow i)$.
Let $F$ be such a functor.
Let $\overline{X}$ be the initial $\mathcal{I}$-diagram, that is the constant diagram on $X\stackrel{=}{\rightarrow} X$.
In order to simplify notations, we note $Y$ for a morphism $X\rightarrow Y$ (an object of $(X\downarrow \Ch)$)
and $Y\rightarrow Y'$ for a commutative triangle relating $X\rightarrow Y$ to $X\rightarrow Y'$
(a morphism of $(X\downarrow \Ch)$).
The diagram $F\times\overline{X}:\mathcal{I}\rightarrow (X\downarrow \Ch)$ is defined on objects by
$F\times\overline{X}(k)=F(k)\times X$ an on arrows by $F\times\overline{X}(\phi)=F(\phi)\times id_X$.
Applying the functorial factorization of Theorem 2.9
to the unique initial morphism $\overline{X}\rightarrow F\times\overline{X}$, we get a decomposition in
$(X\downarrow \Ch)^P$ into a diagram $\mathcal{Y}$ given by
\[
\xymatrix{ &  & \overline{X}\\
\overline{X}\ar@/^{1pc}/[urr]^{=}\ar@/^{-1pc}/[drr]\ar@{>->}[]!R+<4pt,0pt>;[r]_{i}^-{\sim} & G\ar@{->>}[ur]_{p_{1}}\ar@{->>}[dr]^{p_{2}}\\
 &  & F
}.
\]
where the functor $G$ is defined pointwise by the functorial factorization of Theorem 2.9. The map $(p_{1},p_{2}):G\twoheadrightarrow F\times\overline{X}$ is a pointwise fibration and $i$ is a pointwise acyclic cofibration
of chain complexes.
Since the map $(p_{1},p_{2}):G\twoheadrightarrow F\times\overline{X}$ is a pointwise fibration
and $F$ and $\overline{X}$ are pointwise fibrant,
the maps $p_1$ and $p_2$ are pointwise acyclic fibrations: the product $F\times\overline{X}$ is given by the pullback
\[
\xymatrix{
F\times\overline{X} \ar[r]^{p_1} \ar[d]_{p_2} & X \ar[d] \\
F \ar[r] & \bullet }
\]
and pointwise fibrations are stable under pullbacks so $p_1$ and $p_2$ are pointwise fibrations.
Since $id_{\overline{X}}=p_1\circ i$ and $\overline{X}\rightarrow F=p_2\circ i$ are weak equivalences,
the maps $p_1$ and $p_2$ are acyclic by the two-out-of-three property.

The functors $\overline{X}$ and $F$ take their values in $(X\downarrow i)$ by definition.
This implies that the functor $G$ sends morphisms of $\mathcal{I}$ to acyclic fibrations
by definition of the functorial factorization in chain complexes.
We obtain consequently a zigzag of natural transformations $\overline{X}\leftarrow G \rightarrow F$ of functors
$\mathcal{I}\rightarrow (X\downarrow i)$. This zigzag implies that $\mathcal{N}F$ is homotopic to
$\mathcal{N}\overline{X}$, which is itself null up to homotopy.
This concludes the proof of Proposition 3.3.
\end{proof}

\section{Moduli spaces of algebraic structures as homotopy fibers}

\subsection{Moduli spaces of algebra structures on fibrations}

The results of this subsection holds for algebras in $\mathcal{E}$ over a prop in $\mathcal{C}$, where
the category $\mathcal{C}$ is a cofibrantly generated symmetric monoidal model category and
the category $\mathcal{E}$ is a cofibrantly generated symmetric monoidal model category over $\mathcal{C}$.
However, for the sake of simplicity we explain only the case $\mathcal{E}=\mathcal{C}=\Ch$.

We start by recalling \cite[Lemma 7.2]{Fre1}. Let $f:A\rightarrow B$
be a morphism of $\Ch$, we have a pullback
\[
\xymatrix{End_{\{A\rightarrow^{f}B\}}\ar[r]^{d_{0}}\ar[d]_{d_{1}} & End_{B}\ar[d]^{f^{*}}\\
End_{A}\ar[r]_{f_{*}} & Hom_{AB}
}
\]
where $Hom_{AB}$ is defined by $Hom_{AB}(m,n)=Hom_{\Ch}(A^{\otimes m},B^{\otimes n})$.

\begin{lem}[{\cite[Lemma 7.2]{Fre1}}]

(1) If $f$ is a (acyclic) fibration then so is $d_{0}$.

(2) If f is a cofibration, then $d_{1}$ is a fibration. If $f$ is
also acyclic then $d_{1}$ is an acyclic fibration and $d_{0}$ a
weak equivalence.
\end{lem}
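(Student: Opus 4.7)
The plan is to reduce everything to a pointwise statement in $\mathcal{C}$: since fibrations, acyclic fibrations, and limits of $\Sigma$-biobjects (and hence of props) are all detected componentwise, for each arity $(m,n)$ the pullback defining $End_{\{A\to B\}}(m,n)$ can be analysed in $\mathcal{C}$ as
\[
End_{\{A\to B\}}(m,n) = Hom_{\mathcal{E}}(A^{\otimes m},A^{\otimes n}) \times_{Hom_{\mathcal{E}}(A^{\otimes m},B^{\otimes n})} Hom_{\mathcal{E}}(B^{\otimes m},B^{\otimes n}).
\]
For the fibrancy of $End_A$ and $End_B$, the pushout-product axiom and its dual, together with the hypothesis that $A$ and $B$ are fibrant-cofibrant, imply that all tensor powers $A^{\otimes k}, B^{\otimes k}$ are fibrant-cofibrant; the enriched SM7 property of $Hom_{\mathcal{E}}$ then gives fibrancy of each component.

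For part (1), if $f$ is an (acyclic) fibration, I iterate the dual pushout-product axiom (which holds under the limit monoid axioms from Section~6 of \cite{Fre1}) to conclude that $f^{\otimes n}:A^{\otimes n}\to B^{\otimes n}$ is an (acyclic) fibration between fibrant-cofibrant objects. Applying the enriched SM7 with the cofibrant object $A^{\otimes m}$, the pushforward $f_* = Hom_{\mathcal{E}}(A^{\otimes m},f^{\otimes n})$ is an (acyclic) fibration in $\mathcal{C}$. Since $d_0$ is pointwise the pullback of $f_*$ along $f^*$, stability of (acyclic) fibrations under pullback forces $d_0$ to be an (acyclic) fibration.

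For part (2), if $f$ is a cofibration, the pushout-product axiom iterated with the cofibrancy of $A, B$ makes $f^{\otimes m}:A^{\otimes m}\to B^{\otimes m}$ a cofibration between cofibrant objects, and SM7 (together with fibrancy of $B^{\otimes n}$) gives that $f^* = Hom_{\mathcal{E}}(f^{\otimes m}, B^{\otimes n})$ is a fibration; pullback stability yields that $d_1$ is a fibration. When $f$ is moreover acyclic, Brown's lemma combined with the pushout-product axiom makes $f^{\otimes m}$ an acyclic cofibration, so $f^*$ is an acyclic fibration and $d_1$ is one by pullback. To conclude that $d_0$ is a weak equivalence, Brown's lemma also yields that $f^{\otimes n}$ is a weak equivalence between fibrant-cofibrant objects so that $f_*$ is a weak equivalence; in the commutative square $f_* d_1 = f^* d_0$ the three maps $f_*, d_1, f^*$ are all weak equivalences, and two-out-of-three forces $d_0$ to be a weak equivalence.

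The main technical point to watch is that $f^{\otimes n}$ inherits fibration-type properties from $f$: iterated pushout-product directly handles the cofibration-type statements (with Brown's lemma for the acyclic case), but the dual assertion that tensoring with a cofibrant object preserves fibrations is not automatic in a general symmetric monoidal model category, and is precisely where one must appeal to the limit monoid axioms that Fresse has established for the categories in play.
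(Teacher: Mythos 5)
Your proposal is correct and takes essentially the same approach as the paper's source: the paper does not prove this lemma itself but cites \cite[Lemma~7.2]{Fre1}, whose argument --- componentwise reduction (limits and fibrations of props being created in $\Sigma$-biobjects), the pushout-product and pullback-hom axioms, the limit monoid axiom to handle tensor powers of fibrations between fibrant objects, Brown's lemma, and pullback stability together with two-out-of-three --- is exactly what you reproduce, and it is the same toolkit the paper itself rehearses in the proof of Theorem 2.8 for the special case of $Ch_{\mathbb{K}}$. Your closing remark that the fibration-side statements about $f^{\otimes n}$ (and the fibrancy of tensor powers) are not automatic and require the limit monoid axioms of Section~6 of \cite{Fre1} is precisely the right technical point to single out.
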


\begin{rem}
It is a generalization in the prop context of \cite[Proposition 4.1.7]{Rez} and \cite[Proposition 4.1.8]{Rez}.
\end{rem}

\begin{lem}
Let $X_{n}\twoheadrightarrow\cdots\twoheadrightarrow X_{1}\twoheadrightarrow X_{0}$
be a chain of fibrations of chain complexes. For every $0\leq k\leq n-1$, the map $d_{0}$ in the
pullback

\[
\xymatrix{End_{\{X_{n}\twoheadrightarrow\cdots\twoheadrightarrow X_{0}\}}\ar[r]^{d_{0}}\ar[d]_{d_{1}} & End_{\{X_{k}\twoheadrightarrow\cdots\twoheadrightarrow X_{0}\}}\ar[d]\\
End_{\{X_{n}\twoheadrightarrow\cdots\twoheadrightarrow X_{k+1}\}}\ar[r] & Hom_{X_{k+1}X_{k}}\ensuremath{}
}
\]
is a fibration . Moreover, if the fibrations in the chain
$X_{n}\twoheadrightarrow\cdots\twoheadrightarrow X_{1}\twoheadrightarrow X_{0}$
are acyclic then so is $d_{0}$.
\end{lem}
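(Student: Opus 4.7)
My plan is to prove Lemma 4.3 by induction on the number $n$ of arrows in the chain, the base case $n=1$ being precisely Lemma 4.1 (applied with $A=X_{1}$, $B=X_{0}$, and $f$ the unique arrow $X_{1}\twoheadrightarrow X_{0}$).

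For the inductive step, I assume the statement for every chain with at most $n$ arrows and consider a chain $X_{n+1}\twoheadrightarrow\dots\twoheadrightarrow X_{0}$ together with a cut at an index $k$ with $0\leq k\leq n$. Since (acyclic) fibrations are stable under pullback and, by construction, $d_{0}$ is the pullback along the right vertical map of the bottom-row map
\[
\alpha: End_{\{X_{n+1}\twoheadrightarrow\dots\twoheadrightarrow X_{k+1}\}}\rightarrow Hom_{X_{k+1}X_{k}},
\]
it is enough to show that $\alpha$ itself is an (acyclic) fibration. The key observation is that $\alpha$ factors as
\[
End_{\{X_{n+1}\twoheadrightarrow\dots\twoheadrightarrow X_{k+1}\}} \xrightarrow{\pi} End_{X_{k+1}} \xrightarrow{f_{*}} Hom_{X_{k+1}X_{k}},
\]
where $\pi$ is the forgetful map onto the endomorphism prop of the bottom object of the upper sub-chain and $f_{*}$ is postcomposition with the fibration $f: X_{k+1}\twoheadrightarrow X_{k}$.

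The map $f_{*}$ is a fibration (and acyclic when $f$ is) by Lemma 4.1 applied to the single morphism $f$. For $\pi$, two sub-cases arise: if $k=n$, the upper sub-chain is the singleton $\{X_{n+1}\}$ and $\pi$ is the identity, while if $k\leq n-1$ then $\pi$ is precisely the $d_{0}$-map of Lemma 4.3 applied to the upper sub-chain (of length $n-k\leq n$) with the cut placed at its bottom object $X_{k+1}$, hence $\pi$ is a fibration (acyclic when the original chain is made of acyclic fibrations) by the inductive hypothesis. Composing two (acyclic) fibrations shows that $\alpha$ is an (acyclic) fibration, and pulling back yields the same property for $d_{0}$.

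The only real subtlety is book-keeping: one must first verify that the pullback square stated in the lemma is correct, i.e.\ that the end defining $End_{\{X_{n}\twoheadrightarrow\dots\twoheadrightarrow X_{0}\}}$ can be reorganized by splitting the equalizer conditions into those involving only the upper sub-chain, only the lower sub-chain, and the single arrow $X_{k+1}\twoheadrightarrow X_{k}$ linking them; this is a direct unraveling of the coreflexive equalizer description of $End_{F}$ recalled in Section 1. Once that identification is in place, the induction is immediate, and the ``moreover'' clause follows because both ingredients $\pi$ and $f_{*}$ are acyclic fibrations as soon as every arrow in the chain is.
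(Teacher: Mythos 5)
Your argument is correct, and it rests on the same basic ingredients as the paper's proof — induction on the length of the chain, the fact that $f_*\colon End_{X_{k+1}}\rightarrow Hom_{X_{k+1}X_{k}}$ is an (acyclic) fibration when $f$ is, and stability of (acyclic) fibrations under composition and base change — but it organizes them differently. The paper peels off the top arrow: it treats the case $k=n$ directly by pulling back $f_*$ along the right vertical map, and for $k\leq n-1$ it writes $d_0$ as the composite $End_{\{X_{n+1}\twoheadrightarrow\dots\twoheadrightarrow X_{0}\}}\rightarrow End_{\{X_{n}\twoheadrightarrow\dots\twoheadrightarrow X_{0}\}}\rightarrow End_{\{X_{k}\twoheadrightarrow\dots\twoheadrightarrow X_{0}\}}$ of the $k=n$ map with the map supplied by the induction hypothesis for the truncated chain $X_{n}\twoheadrightarrow\dots\twoheadrightarrow X_{0}$. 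You instead keep the displayed square fixed, factor its bottom arrow as $f_*\circ\pi$ where $\pi$ is the $d_0$-map of the upper sub-chain $X_{n+1}\twoheadrightarrow\dots\twoheadrightarrow X_{k+1}$ cut at its bottom object (induction hypothesis applied to the upper rather than the lower part of the chain), and then base-change; both routes are valid, and yours has the mild advantage of reading the conclusion off the square exactly as it is displayed in the lemma, at the cost of having to identify the bottom arrow explicitly. One small adjustment: the statement that $f_*$ is an (acyclic) fibration is not literally the content of Lemma 4.1, which concerns $d_0$ and $d_1$; it is established inside the proof of Lemma 7.2 of Fresse's paper, which is precisely what the paper cites at this point, so you should reference that proof (or reproduce the enriched pushout-product argument) rather than Lemma 4.1 itself — a citation fix, not a gap.
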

\begin{proof}
We prove this lemma by induction. The case $n=1$ is Lemma 4.1. Now
suppose that our lemma is true for a given integer $n\geq1$. Let
$X_{n+1}\twoheadrightarrow\cdots\twoheadrightarrow X_{1}\twoheadrightarrow X_{0}$
be a chain of fibrations of complexes. We distinguish two
cases:

\textbf{-the case $k=n$}: we have the pullback
\[
\xymatrix{End_{\{X_{n+1}\twoheadrightarrow\cdots\twoheadrightarrow X_{0}\}}\ar[r]^{d_{0}}\ar[d]_{d_{1}} & End_{\{X_{n}\twoheadrightarrow\cdots\twoheadrightarrow X_{0}\}}\ar[d]\\
End_{X_{n+1}}\ar[r]_{f_{*}} & Hom_{X_{n+1}X_{n}}\ensuremath{}
}
\]
where $f:X_{n+1}\twoheadrightarrow X_{n}$. The fact that $f$ is
a fibration implies that $f_{*}$ is a fibration, so $d_{0}$ is a
fibration because of the stability of fibrations under pullback, and
the acyclicity of $f$ implies the acyclicity of $d_{0}$. The detailed
proof of these statements is done in the proof of \cite[Lemma 7.2]{Fre1}.

\textbf{-the case $0\leq k\leq n-1$}: $d_{0}=End_{\{X_{n+1}\twoheadrightarrow\cdots\twoheadrightarrow X_{0}\}}\rightarrow End_{\{X_{n}\twoheadrightarrow\cdots\twoheadrightarrow X_{0}\}}\rightarrow End_{\{X_{k}\twoheadrightarrow\cdots\twoheadrightarrow X_{0}\}}$
is the composite of an map satisfying the induction hypothesis with
the map of the case $k=n$, so the conclusion follows.
\end{proof}

\begin{rem}
This lemma is the generalization in the prop context of \cite[Proposition 4.1.9]{Rez}.
\end{rem}
We deduce from lemmata 4.1 and 4.3 the following properties of our
moduli spaces:
\begin{prop}
Let $f:X\rightarrow Y$ be a chain complex morphism and $P$ be
a cofibrant dg prop. The pullback of lemma 5.9 gives
rise to the following diagram of simplicial sets:

\[
P\{X\}\stackrel{(d_{1})_{*}}{\leftarrow}P\{f\}\stackrel{(d_{0})_{*}}{\rightarrow}P\{Y\}
\]

(1) If $f$ is a cofibration then $(d_{1})_{*}$ is a fibration. Moreover,
if $f$ is acyclic then $(d_{0})_{*}$ and $(d_{1})_{*}$ are weak
equivalences.

(2) If $f$ is a fibration then $(d_{0})_{*}$ is a fibration. Moreover,
if $f$ is acyclic then $(d_{0})_{*}$ and $(d_{1})_{*}$ are weak
equivalences.
\end{prop}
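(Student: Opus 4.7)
The plan is to combine the pullback square lemma for endomorphism props (Lemma 4.1) with the fact that the mapping space functor $Mor_{\mathcal{P}}(P\otimes\Delta^{\bullet},-)$ sends fibrations and weak equivalences between fibrant props to fibrations and weak equivalences of simplicial sets (Proposition 1.9). Since $P$ is cofibrant, $P\otimes\Delta^{\bullet}$ is a cosimplicial resolution of $P$, so Proposition 1.9 applies. I would first observe that $End_X$, $End_Y$, and $End_f$ are all fibrant props: componentwise fibrancy of $End_X$ and $End_Y$ is automatic since $X,Y\in\mathcal{E}^{cf}$, and $End_f$ is fibrant as a componentwise pullback of fibrant objects in $\Sigma$-biobjects.

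For part (1), assume $f$ is a cofibration. Lemma 4.1(2) gives that $d_1:End_f\twoheadrightarrow End_X$ is a fibration of fibrant props, so Proposition 1.9(2) immediately yields that $(d_1)_*:P\{f\}\twoheadrightarrow P\{X\}$ is a fibration of simplicial sets. If in addition $f$ is acyclic, Lemma 4.1(2) strengthens this to: $d_1$ is an acyclic fibration and $d_0$ is a weak equivalence between fibrant props; Proposition 1.9(2) and (3) then promote these to weak equivalences $(d_1)_*$ and $(d_0)_*$.

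For part (2), assume $f$ is a fibration. Lemma 4.1(1) gives that $d_0:End_f\twoheadrightarrow End_Y$ is a fibration, acyclic when $f$ is, and Proposition 1.9(2) promotes this to the analogous statement for $(d_0)_*$. The only remaining point is that $(d_1)_*$ is also a weak equivalence when $f$ is an acyclic fibration. For this I would run the symmetric version of the reasoning of Lemma 4.1(1): the pushout-product axiom combined with Ken Brown's lemma, applied to the cofibrant-fibrant $X$ and $Y$, shows that $f^{\otimes n}$ is an acyclic fibration for every $n$; hence $f^*:End_Y\to Hom_{XY}$ is a pointwise acyclic fibration of $\Sigma$-biobjects. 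Since $d_1$ is the pullback of $f^*$ along $f_*$ and acyclic fibrations are stable under pullback, $d_1$ is an acyclic fibration between fibrant props, and Proposition 1.9(3) concludes.

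The main (mild) obstacle is precisely this last step: Lemma 4.1 as stated only addresses $d_0$ when $f$ is an acyclic fibration, so the weak equivalence of $d_1$ in part (2) has to be supplied by hand, via pullback stability together with the symmetric pushout-product argument paralleling the proof of Lemma 4.1(1) in \cite{Fre1}. Once this point is granted, the rest of the proof is a clean transcription of Lemma 4.1 through the functor $Mor_{\mathcal{P}}(P\otimes\Delta^{\bullet},-)$ via Proposition 1.9.
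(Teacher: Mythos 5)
Your overall strategy is exactly the paper's: feed Lemma 4.1 into the mapping-space statement (the paper's Proposition 1.11, i.e. Hirschhorn 16.5.3--16.5.4), after noting that $End_X$, $End_Y$ and hence $End_f$ are fibrant props; part (1) and the $(d_0)_*$ half of part (2) are treated this way in the paper as well, which then dismisses (2) with ``the proof is the same''. You are also right that the only point not literally covered by Lemma 4.1 is why $(d_1)_*$ is a weak equivalence when $f$ is an acyclic fibration. However, the argument you supply for that point fails as stated: from $f^{\otimes m}$ being an acyclic fibration you cannot conclude that $f^{*}=(-\circ f^{\otimes m})\colon End_Y\rightarrow Hom_{XY}$ is a pointwise acyclic fibration. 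The SM7/pushout-product machinery makes $Hom(-,Z)$ (with $Z$ fibrant) turn (acyclic) \emph{cofibrations} into (acyclic) fibrations, and $Hom(A,-)$ (with $A$ cofibrant) preserve (acyclic) fibrations; precomposition with an acyclic \emph{fibration} need not be a fibration. Concretely, in $Ch_{\mathbb{K}}$ a homogeneous map $g\colon X^{\otimes m}\rightarrow Y^{\otimes n}$ lies in the image of $f^{*}$ only if it vanishes on $\ker(f^{\otimes m})$, so $f^{*}$ is not surjective as soon as $f$ has nonzero kernel (e.g. the projection $Y\oplus D\twoheadrightarrow Y$ with $D$ acyclic), and the same computation shows that $d_1$ itself need not be a fibration.

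The step is easily repaired, and this is presumably what the paper's ``same proof'' hides: since $f$ is a weak equivalence between cofibrant objects, so is $f^{\otimes m}$ (Brown's lemma), hence $f^{*}$ is a weak equivalence because $Hom(-,Y^{\otimes n})$ with fibrant target sends weak equivalences between cofibrant objects to weak equivalences. On the other leg, $f_{*}=(f^{\otimes n}\circ -)$ is a pointwise acyclic fibration (for general $\mathcal{E}$ the fact that $f^{\otimes n}$ is again an acyclic fibration uses the limit-monoid type axiom of Fresse rather than the pushout-product axiom alone; in $Ch_{\mathbb{K}}$ it is simply that tensor products of surjective quasi-isomorphisms are surjective quasi-isomorphisms). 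Then $d_1$ is the base change of the weak equivalence $f^{*}$ along the fibration $f_{*}$, hence a weak equivalence since every object in sight is fibrant (right properness of the pointwise pullback), and it is a weak equivalence between fibrant props because $End_f$ is fibrant ($d_0$ being a fibration with fibrant target). Your final appeal to the weak-equivalence clause of the mapping-space proposition then goes through unchanged; only the claim that $d_1$ is an acyclic fibration has to be dropped.
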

\begin{proof}
(1) If $f$ is a cofibration then $d_{1}$ is a fibration. So $(d_{1})_{*}$
is a fibration of simplicial sets according to Proposition 1.11. If
$f$ is acyclic, then $d_{0}$ and $d_{1}$ are weak equivalences.
Every chain complex is fibrant and cofibrant, and fibrations of props are determined componentwise, so $End_{X}$ and $End_{Y}$
are fibrant props. This implies that $End_{\{f\}}$ is also fibrant.
We deduce from this and Proposition 1.11 that $(d_{0})_{*}$ and $(d_{1})_{*}$
are weak equivalences.

(2) The proof is the same as in the previous case.
\end{proof}
By induction we can also prove the following proposition:
\begin{prop}
Let $X_{n}\stackrel{\sim}{\twoheadrightarrow}\cdots\stackrel{\sim}{\twoheadrightarrow}X_{1}\stackrel{\sim}{\twoheadrightarrow}X_{0}$
be a chain of acyclic fibrations and $P$ be a cofibrant dg prop. For every $0\leq k\leq n-1$, the
map $(d_{0})_{*}$ is an acyclic fibration and $(d_{1})_{*}$ a weak
equivalence in the diagram below:

\[
P\{X_{n}\stackrel{\sim}{\twoheadrightarrow}\cdots\stackrel{\sim}{\twoheadrightarrow}X_{k+1}\}\stackrel{(d_{1})_{*}}{\leftarrow}P\{X_{n}\stackrel{\sim}{\twoheadrightarrow}\cdots\stackrel{\sim}{\twoheadrightarrow}X_{0}\}\stackrel{(d_{0})_{*}}{\rightarrow}P\{X_{k}\stackrel{\sim}{\twoheadrightarrow}\cdots\stackrel{\sim}{\twoheadrightarrow}X_{1}\}.
\]

\end{prop}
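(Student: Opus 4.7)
The plan is to proceed by induction on $n$, using Proposition 4.5 as the base case $n=1$ and Lemma 4.3 to control the relevant endomorphism props at the inductive step. Concretely, one iteratively factors the diagram
\[
P\{X_n\stackrel{\sim}{\twoheadrightarrow}\cdots\stackrel{\sim}{\twoheadrightarrow}X_0\} \longrightarrow P\{X_k\stackrel{\sim}{\twoheadrightarrow}\cdots\stackrel{\sim}{\twoheadrightarrow}X_1\}
\]
through $P\{X_{n-1}\stackrel{\sim}{\twoheadrightarrow}\cdots\stackrel{\sim}{\twoheadrightarrow}X_0\}$ (or a similar shorter subchain), reducing the assertion to cases already known.

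First, I would push the pullback of Lemma 4.3 through the moduli space functor $P\{-\}=\mathrm{Mor}_{\mathcal{P}}(P\otimes\Delta^{\bullet},-)$. Since every $X_i$ is fibrant and cofibrant, each $\mathrm{End}_{X_i}$ is a fibrant prop, and the pullback description of $\mathrm{End}_{\{X_n\twoheadrightarrow\cdots\twoheadrightarrow X_0\}}$ (as an iterated fiber product of fibrant props over prop-fibrations, by Lemma 4.3 and Lemma 4.1) shows that this endomorphism prop is itself fibrant. Lemma 4.3 then gives that $d_0\colon \mathrm{End}_{\{X_n\twoheadrightarrow\cdots\twoheadrightarrow X_0\}}\to \mathrm{End}_{\{X_k\twoheadrightarrow\cdots\twoheadrightarrow X_0\}}$ is an acyclic fibration, because the full chain is acyclic. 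Applying Proposition 1.11 with the cofibrant resolution $P\otimes\Delta^{\bullet}$ of $P$ turns acyclic fibrations of fibrant props into acyclic fibrations of simplicial sets, so $(d_0)_*$ is an acyclic fibration of $P\{-\}$'s, yielding the first claim of the proposition.

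For the $(d_1)_*$ part, I would combine the acyclic fibration just established with the inductive hypothesis via two-out-of-three. The composite of $(d_1)_*$ with the known weak equivalence induced by one of the acyclic fibrations in the chain will produce, after an appropriate re-bracketing of the chain, a weak equivalence that is already known: either the base case Proposition 4.5(2), applied to a single acyclic fibration $X_{i+1}\stackrel{\sim}{\twoheadrightarrow}X_i$, or the inductive hypothesis applied to a shorter subchain. Using that the endomorphism props of all intermediate subchains are fibrant, every factor in the resulting zigzag lands in a fibrant simplicial set, so two-out-of-three applies cleanly.

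The only real step needing care is the bookkeeping of the induction: one has to organize the decomposition of the long chain so that at each stage either the base case (Proposition 4.5) or the inductive hypothesis is directly applicable, and simultaneously ensure that the pullback producing $\mathrm{End}_{\{X_n\twoheadrightarrow\cdots\twoheadrightarrow X_0\}}$ is preserved by $\mathrm{Mor}_{\mathcal{P}}(P\otimes\Delta^{\bullet},-)$ at each step, which holds because $\mathrm{Mor}$ preserves limits in the second variable. Given the strong statement already proved in Lemma 4.3, I expect no genuine obstacle beyond this organizational induction.
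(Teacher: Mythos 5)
Your proposal is correct and is essentially the argument the paper intends: the paper gives no details beyond ``by induction,'' and the natural filling-in is exactly yours, namely $(d_0)_*$ is an acyclic fibration directly from Lemma 4.3 together with Proposition 1.11 applied to the cosimplicial frame $P\otimes\Delta^{\bullet}$, and $(d_1)_*$ follows by two-out-of-three after post-composing with a $(d_0)_*$-type acyclic fibration and identifying the composite restriction with one covered by Proposition 4.5(2) or the inductive hypothesis on a shorter subchain. The only bookkeeping worth making explicit is the case $k=n-1$ (target a single object), which is handled by first treating $k\leq n-2$ and then factoring the restriction to $X_n$ through $P\{X_n\twoheadrightarrow X_{n-1}\}$; your sketch already accommodates this.
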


\begin{rem}
Propositions 4.5 and 4.6 are generalizations in the prop context
of \cite[Proposition 4.1.11]{Rez}, \cite[Proposition 4.1.12]{Rez} and \cite[Proposition 4.1.13]{Rez}.
\end{rem}

\subsection{Proof of Theorem 0.1}

We have now all the key results to generalize Rezk's theorem to algebras
over dg props. The remaining arguments are the same as that of Rezk,
so we will not repeat it with all details but essentially show how
our Theorem 3.1, as well as the main theorem of \cite{Yal}, fit in the proof.

Let $P$ be a cofibrant dg prop, and $\mathcal{N} w\Ch^{P\otimes\Delta^{\bullet}}$
the bisimplicial set defined by
\[
(\mathcal{N} w\Ch^{P\otimes\Delta^{\bullet}})_{m,n}=(\mathcal{N} w\Ch^{cf})^{P\otimes\Delta^n})_{m}.
\]
The dg prop $P$ is cofibrant, thus so is $P\otimes\Delta^n$ for every
$n\geq0$. According to Theorem 3.1, we have a weak equivalence
induced by an inclusion of categories
\[
\mathcal{N} fw\Ch^{P\otimes\Delta^n}\stackrel{\sim}{\rightarrow}\mathcal{N} w\Ch^{P\otimes\Delta^n}
\]
Morevoer, for every $n,n'\geq0$, $\Delta^n\rightarrow\Delta^{n'}$
induces a weak equivalence of cofibrant dg props $P\otimes\Delta^n\rightarrow P\otimes\Delta^{n'}$
and thereby a weak equivalence of simplicial sets
\[
\mathcal{N} w\Ch^{P\otimes\Delta^{n'}}\stackrel{\sim}{\rightarrow}\mathcal{N} w\Ch^{P\otimes\Delta^n}
\]
according to \cite[Theorem 0.1]{Yal}. We obtain a zigzag of weak equivalences
\[
diag\mathcal{N} fw\Ch^{P\otimes\Delta^{\bullet}}\stackrel{\sim}{\rightarrow}diag\mathcal{N} w\Ch^{P\otimes\Delta^{\bullet}}\stackrel{\sim}{\leftarrow}\mathcal{N} w\Ch^{P}
\]
We use an adaptation of a slightly modified version of Quillen's theorem
B (cf. \cite{Qui}), namely \cite[Lemma 4.2.2]{Rez}, in order
to determine the homotopy fiber of the map $diag\mathcal{N} fw\Ch^{P\otimes\Delta^{\bullet}}\rightarrow\mathcal{N} fw\Ch$.
To prove that our map verifies the hypotheses of this lemma we use
the propositions of Section 4.1 exactly in the same way as Rezk in
the operadic case. Then we check that $diag(U\downarrow X)\simeq P\{X\}$
where $U: fw\Ch^{P\otimes\Delta^{\bullet}}\rightarrow fw\Ch$
is the forgetful functor (by using again the propositions of Section
4.1) and finally we get the following diagram:
\[
\xymatrix{P\{X\}\ar[r]\ar[d] & diag\mathcal{N} fw\Ch^{P\otimes\Delta^{\bullet}}\ar[d]\ar[r]^{\sim} & diag\mathcal{N} w\Ch^{P\otimes\Delta^{\bullet}} & \mathcal{N} w\Ch^{P}\ar[l]_{\sim}\ar[d]\\
pt\ar[r] & \mathcal{N}fw\Ch\ar[rr]^{\sim} &  & \mathcal{N}w\Ch
}.
\]
The proof of Theorem 0.1 is complete.

\begin{rem}
Note that we can recover the transfer theorem of bialgebras structures of \cite[Theorem A]{Fre1} as a consequence
of Theorem 0.1. Indeed, let $P$ be a cofibrant dg prop in $\Ch$.
Let $X\stackrel{\sim}{\rightarrow}Y$ be a morphism of $\Ch$
such that $Y$ is endowed with a $P$-algebra structure. We have a
homotopy pullback of simplicial sets
\[
\xymatrix{P\{X\}\ar[d]\ar[r]^-{p} & \mathcal{N}w\Ch^{P}\ar[d]^{\mathcal{N}U}\\
\{X\}\ar[r] & \mathcal{N}w\Ch
}
\]
which induces an exact sequence of pointed sets
\[
\pi_{0}P\{X\}\rightarrow\pi_{0}\mathcal{N}w\Ch^{P}\rightarrow\pi_{0}\mathcal{N}w\Ch.
\]
The base point of the set $\pi_{0}\mathcal{N}w\Ch$
is the weak equivalence class of $X$, denoted by $[X]$. The weak
equivalence $X\stackrel{\sim}{\rightarrow}Y$ in $\Ch$
implies that we have the equality $[Y]=[X]$ and thus $\pi_{0}\mathcal{N}U([Y]_{P})=[X]$,
where $[Y]_{P}$ is the weak equivalence class of $Y$ in $\Ch^{P}$.
The exactness of the above sequence implies that $\pi_{0}p(P\{X\})=(\pi_{0}\mathcal{N}U)^{-1}([X])$
so $[Y]_{P}\in\pi_{0}p(P\{X\})$. This means that there exists a $P$-algebra
structure on $X$ such that we have a zigzag of $P$-algebras morphisms
\[
X\stackrel{\sim}{\leftarrow}\cdots\stackrel{\sim}{\rightarrow}Y
\]
which are weak equivalences of $\Ch$.
\end{rem}
\begin{rem}
We do not adress the case of simplicial sets. However \cite[Theorem 1.4]{JY} endows the algebras over a colored prop in simplicial
sets with a model category structure. Moreover, the free algebra functor
exists in this case. Therefore one can transpose the methods used
in the operadic setting to obtain a simplicial version of \cite[Theorem 0.1]{Yal}. Theorem 0.1 in simplicial sets can be proved by following step
by step Rezk's original proof. We also conjecture that our results
have a version in simplicial modules which would follow from arguments
similar to ours.
\end{rem}

\end{document}